\definecolor{db}{RGB}{0, 0, 130}
\definecolor{rp}{rgb}{0.25, 0, 0.75}
\definecolor{dg}{rgb}{0, 0.6, 0}
\newcommand{\R}{\mathbb{R}}
\newcommand{\N}{\mathbb{N}}
\newcommand{\EE}{\mathbb{E}}
\newcommand{\Md}{\mathbb{M}_d}
\newcommand{\dn}{\delta_n}
\newcommand{\ens}{\eta_n(s)}
\newcommand{\ent}{\eta_n(t)}
\newcommand{\enu}{\eta_n(u)}
\newcommand{\enr}{\eta_n(r)}
\newcommand{\customlabel}[2]{%
   \protected@write \@auxout {}{\string\newlabel {#1}{{#2}{\thepage}{#2}{#1}{}}}%
   \hypertarget{#1}{#2\hspace{-0.13cm}}
}
\newtheorem{theorem}{Theorem}[section]
\newtheorem{definition}{Definition}[section]
\newtheorem{corollary}[definition]{Corollary}
\newtheorem{example}[definition]{Example}
\newtheorem{assumption}[theorem]{Assumption}
\newtheorem{lemma}[definition]{Lemma}
\newtheorem{proposition}[definition]{Proposition}
\newtheorem{remark}[definition]{Remark}
\DeclareMathOperator{\Tr}{Trace}
\def\E{\mathbb{E}}
\def\x{\times}
\def\Om{\Omega}
\def\Fc{\mathcal{F}}
\def\F{\mathbb{F}}
\def\P{\mathbb{P}}
\def\Xb{\overline{X}}
\def\eps{\varepsilon}
\def\Xh{\widehat{X}}
\def\Ph{\widehat{P}}
\def\Yh{\widehat{Y}}
\author{Alexandre Richard\footnote{Universit\'e Paris-Saclay, CentraleSup\'elec, MICS and CNRS FR-3487, France. \texttt{alexandre.richard@centralesupelec.fr}.}
\and Xiaolu Tan\footnote{Department of Mathematics, The Chinese University of Hong Kong. \texttt{xiaolu.tan@cuhk.edu.hk}, research supported by CUHK startup grant and CUHK Faculty of Science Direct Grant 2020-2021.}
\and Fan Yang\footnote{Department of Mathematics, The Chinese University of Hong Kong. \texttt{fyang@math.cuhk.edu.hk}.}
}
\title{ \Large{\textbf{Discrete-time Simulation of Stochastic Volterra Equations}}\footnote{We are grateful to Eduardo Abi Jaber for helpful discussions, and to two anonymous reviewers for their useful comments and suggestions.}}
\begin{document}

\maketitle

\begin{abstract}

	We study discrete-time simulation schemes for stochastic Volterra equations,
	namely the Euler and Milstein schemes, and the corresponding Multilevel Monte-Carlo method.
	By using and adapting some results from Zhang \cite{Zhang}, together with the Garsia-Rodemich-Rumsey lemma,
	we obtain the convergence rates of the Euler scheme and Milstein scheme under the supremum norm.
	We then apply these schemes to approximate the expectation of functionals of such Volterra equations by the (Multilevel) Monte-Carlo method, and compute their complexity.
	 We finally  provide some numerical simulation results.

\end{abstract}

\noindent \textbf{Key words:} Stochastic  Volterra equations, Euler scheme, Milstein scheme,  Monte-Carlo method, MLMC.

\medskip

\noindent\textbf{MSC2010 subject classification:} 60H20 ; 65C05 ; 65C30.

\section{Introduction}

	We study the discrete-time approximation problem for  stochastic Volterra equations of the form
	\begin{align}\label{eq:defX_intro}
		X_t = X_0 + \int_0^t K_1(t,s) b(s,X_s)~ds + \int_0^t K_2(t,s) \sigma(s,X_s)~dW_s,~t\in[0,T],
	\end{align}
	by means of the Euler scheme, the Milstein scheme and the corresponding Multilevel Monte-Carlo method.
	In the above equation, $X$ is an $\R^d$-valued process, $W$ is a $d$-dimensional standard Brownian motion,
	 $K_1, K_2$ are (possibly singular) kernels,
	 and $b, \sigma$ are coefficient functions whose properties will be detailed below.
	
	\vspace{0.5em}
	
	As natural extension of (deterministic) Volterra equations, the stochastic Volterra equation is motivated by the physics of heat transfer 
	(see for instance the introductory example of the book of Gripenberg, Londen and Staffans \cite{GripenbergEtAl}  with a random source term), 
	the physics of dissipative dynamics and anomalous diffusions (see for instance Jak{\v{s}}i{\'c} and Pillet \cite{JaksicPillet}, resp. Lutz \cite{Lutz}),
	and has been studied since the works of Berger and Mizel \cite{BergerMizel} and Protter \cite{Protter} in the non-singular kernels and Lipschitz coefficients case.
	Let us also mention the recent rough volatility modelling in mathematical finance,
	which leads to some 	affine Volterra equations, see e.g. El Euch and Rosenbaum \cite{ElEuchRosenbaum}, and Abi Jaber, Larsson, and Pulido \cite{jaber2019affine}.
	
	\vspace{0.5em}

	The main objective of the paper is to study the discrete-time simulation problem for the stochastic Volterra equation \eqref{eq:defX_intro}.
	Observe that when $K_1 \equiv K_2 \equiv I_d$, the Volterra equation degenerates into a standard SDE,
	and the corresponding Monte-Carlo simulation problem has been tremendously studied during the last decades.
	In general, the simulation of SDEs is based on discrete-time schemes,
	and to estimate the expectation of a functional of an SDE by Monte-Carlo method,
	one has two kinds of error: the discretization error and the statistical error.
	The statistical error is proportional to $\frac{1}{\sqrt{N}}$, where  $N$ is the number of simulated copies of the SDE, by an application of the Central Limit Theorem.
	The discretization error depends essentially on the time step $\Delta t$.
	For the most simple Euler scheme, a (weak) convergence rate of the discretization error has been initially obtained by Talay and Tubaro \cite{talay1990expansion}.
 	Since then, many works have been devoted to study various schemes under different conditions.
	For an overview on this subject, let us refer to Kloeden and Platen \cite{kloeden2013numerical}, Graham and Talay \cite{graham2013stochastic}, and also Jourdain and Kohatsu-Higa \cite{JourdainKH} for a recent review.
	To reduce the discretization error, one needs to use finer discretization,
	which increases the computational complexity for the simulation of the process, and hence increases the statistical error given a fixed total computation effort.
	Then one needs to make a trade-off between the two errors to minimize the total error.

	\vspace{0.5em}

	To improve the usual trade-off between the two errors, Giles \cite{giles2008multilevel} introduced the so-called Multilevel Monte-Carlo (MLMC) method,
	which has been applied and improved in various situations, and has generated a stream of literature,
	see e.g. Giles and Szpruch \cite{GilesSzpruch}, Alaya and Kebaier \cite{alaya2015central}, etc.
	The main idea of the MLMC method is to consider different levels of the time discretization, and rewrite the finest discrete scheme as a telescopic sum of differences between consecutive levels, and then to choose the number of simulations at each level in an optimal way. Let us mention that MLMC has already been studied in the setting of SDEs driven by fractional Brownian motions (denoted later by fBm): first in Kloeden, Neuenkirch and Pavani \cite{KloedenNeuenkirchPavani} with Hurst exponent $H>\tfrac{1}{2}$ and additive fractional noise, and then extensions to rough SDEs in Bayer, Friz, Riedel, and Schoenmakers \cite{BayerFrizEtAl}. 
	This latter article corresponds to a Hurst exponent $H\in(\tfrac{1}{4},\tfrac{1}{2})$, which is still far from the observed roughness of the volatility ($H\approx 0.1$, see Gatheral, Jaisson and Rosenbaum \cite{GatheralJaissonRosenbaum}). The advantage of the Volterra approach compared to integration w.r.t. fBm is that one can achieve very low path regularities, while an equivalent approach through rough paths would be restricted, in practice (although not theoretically), to $H>\tfrac{1}{4}$ (\cite{BayerFrizEtAl}).

	\vspace{0.5em}

	In this paper, we will study the discretization error of the Euler scheme and the Milstein scheme for the stochastic Volterra equation \eqref{eq:defX_intro} with any H\"older regularity (Hurst exponent) between $0$ and $1$,
	and then adapt the MLMC technique in our context.
	For the stochastic Volterra equation in a more general form, the corresponding Euler scheme has already been  studied by Zhang \cite{Zhang},
	where the main results state that, for the uniform discretization scheme with time step $\Delta t = 2^{-n} T$, 
	the discretization error is bounded by $C 2^{- n\eta}$, for some constant $\eta > 0$ (which is not given explicitly but might be found in the proof).
	In this paper, we let $(X^n_t)_{0 \le t \le T}$ denote the solution of the Euler scheme with a general (not necessary uniform) discretization $\pi_n$,
	and adapt the techniques in \cite{Zhang} to our context to obtain an explicit convergence rate of $\E \big[ | X_t - X^n_t |^p \big]$ for each fixed $t \in [0,T]$ and $p \ge 1$.
	Then, in place of the argument with Kolmogorov’s continuity criterium used in \cite{Zhang}, 
	we apply the technique based on the Garsia-Rodemich-Rumsey lemma to obtain an explicit rate for the supremum norm error $\E \big[ \sup_{0 \le t \le T} |X_t - X^n_t|^p \big]$.
	Our new technique provides a better convergence rate than the one in \cite{Zhang}, in particular when not all moments of the initial condition are integrable,
	and the discretization $\pi_n$ could be arbitrary rather than the special uniform discretization of size $T 2^{-n}$ that is required in the technical proof of \cite{Zhang} (see also Remark \ref{rem:EulerScheme}).
	Next, we introduce and extend our techniques and results to a higher order scheme, the Milstein scheme, in order to improve the convergence rate.
	We then study the MLMC method based on the Euler scheme, and compare their computational cost for a given theoretical error.
	These different methods are also tested with various numerical examples.
	We would like also to mention the recent paper \cite{LHH} which appeared at the same time as ours, where the authors study both Euler and Milstein scheme of Volterra equation \eqref{eq:defX_intro} with special kernel $K_1(t,s) := (t-s)^{-\alpha}$ and $K_2(t,s) := (t-s)^{-\beta}$.
	The paper establishes a convergence rate result on $\E \big[ | X_t - X^n_t |^p \big]$ for every fixed $t \in [0,T]$, which is essentially the same as ours.

	\vspace{0.5em}
	
	The rest of the paper is organized as follows. In Section \ref{sec:results}, we state some conditions on $K_{1},~K_{2},~b$ and $\sigma$ that we require for the Euler and Milstein schemes. We then present these two schemes and the corresponding convergence rate results in Theorems \ref{thm:EulerScheme} and \ref{th:convMilstScheme}. 
	In the third part of this section, we detail the Multilevel Monte-Carlo method to approximate quantities of the form $\EE[f(X_{\cdot})]$ and provide some complexity analysis for a given error. 
	Then, in Section \ref{sec:examples}, we provide some numerical examples for these  simulation methods.
	Finally, Section \ref{sec:proofs} gathers the proofs of Theorems \ref{thm:EulerScheme} and \ref{th:convMilstScheme}.

\section{Time discretization of the stochastic Volterra equation and the error analysis}\label{sec:results}

	Let us denote by $\Md$ the set of all $d \x d$-dimensional matrices,
	equipped with the norm $\| \cdot\|$ defined by $\|M\|^2:=\Tr(MM^{\top})$ for all $M \in \Md$.
	The space $\R^d$ is equipped with  the Euclidean norm, denoted by $|\cdot|$ or $\| \cdot \|$ according to the context. Let $T>0$.
	We consider the following  stochastic Volterra equation,
	with the kernels $K_1, K_2:[0,T]^2 \rightarrow \Md$, and  coefficient functions  $b:[0,T]\times \R^d \rightarrow \R^d$, $\sigma:[0,T] \times \R^d \rightarrow \Md$,
	\begin{equation}\label{eq:defX}
		X_t=X_0+\int_0^t K_1(t,s)b(s,X_s)~ds+\int_0^t K_2 (t,s) \sigma(s, X_s)~ dW_s, ~t\in [0,T],
	\end{equation}
	where $W$ is a $d$-dimensional standard Brownian motion in a filtered probability space $(\Om, \Fc, \F, \P)$,
	and the solution $X = (X_t)_{0 \le t \le T}$ is an $\R^d$-valued continuous adapted process.
	Throughout the paper, we assume the conditions on $K_1, K_2, b$ and $\sigma$ in Assumption \ref{assum:main}.
	In particular, under Assumption \ref{assum:main}, the Volterra equation \eqref{eq:defX} has a strong solution, which is unique in $L^p([0,T] \x \Omega)$ for some $p$ large enough 
	(see e.g. Coutin and Decreusefond  \cite[Theorem 3.2]{CoutinDecreusefond} or Wang \cite[Theorem 1.1]{Wang}).
	
	\vspace{0.5em}

	Let us consider, for each $n \ge 1$, a discrete grid $\pi_n = \{0 =t^n_0 < t^n_1 < \dots < t^n_n =T\}$, and denote
	\begin{equation} \label{eq:def_eta_n}
		\delta_n := \max_{0 \le k \le n-1} (t^n_{k+1} - t^n_k),
		~~\mbox{and}~
		\ens:= t^n_k,
		~~\mbox{for}~
		s \in \big[ t^n_k, t^n_{k+1} \big), ~k \ge 0.
	\end{equation}	

	\begin{assumption} \label{assum:main}
		Let $\alpha_1 >0 $, $\alpha_2 >0 $, $\beta_1 > 1$, $\beta_2 > 1$, $\alpha := \alpha_1 \wedge \alpha_2$ and $C > 0$ be fixed constants,
		
		\begin{enumerate}[label=$\mathbf{(A\arabic*)}$]
		\item\label{eq:A1}
		$K_i(t,s) = 0$ whenever $s \ge t$,  and
		\begin{equation*}
			\int_0^t 
			\Big( \|K_1(t,s)\|^{\beta_1} ~+~ \|K_2(t,s)\|^{2 \beta_2} \Big) 
			ds
			<
			\infty,
			~\mbox{for all}~ t \in [0,T];
		\end{equation*}
		
		\item\label{eq:A2}
		for all $t \le t'$, and $n \ge 1$, it holds that
		$$
			\int_t^{t'} 
			\Big(
				\|K_1(t',s)\|
				+
				\|K_1 (t',\ens)\|
			\Big) 
			ds
			~\leq~
			C(t'-t)^{\alpha_1 \wedge 1},
		$$
		and
		$$
			\int_t^{t'}
			\Big(
				\|K_2(t',s)\|^2
				+
				\|K_2 (t',\ens)\|^2
			\Big)
			ds
			\leq
			C(t'-t)^{ 2( \alpha_2 \wedge 1)};
		$$
		
		\item\label{eq:A3} 
		for all $t \in [0,T]$, $n \ge 1$ and $\delta\in(0, \tfrac{t}{2}\wedge(T-t))$, it holds that
		\begin{equation*}
			\int_0^{t} 
			\Big(
				\|K_1(t+\delta,s)-K_1(t,s)\|
				+ 
				\|K_1(t+\delta,\ens)-K_1(t,\ens)\|
			\Big) ds
			~\leq~
			C \delta^{\alpha_1 \wedge 1},
		\end{equation*}
		and
		\begin{equation*}
			\int_0^{t} 
			\Big(
				\|K_2(t+\delta,s)-K_2(t,s)\|^2
				+ 
				\|K_2(t+\delta,\ens)-K_2(t,\ens)\|^2
			\Big) ds
			~\leq~
			C \delta^{2(\alpha_2 \wedge 1)};
		\end{equation*}

		\item\label{eq:A4} for all $t \in [0,T]$, $n \ge 1$, it holds that
		\begin{equation*}
			\int_0^t 
				\| K_1 (t,s) - K_1 (t,\ens) \|
			~ds
			\le C \dn^{ \alpha_1 \wedge 1},
			~~
			\int_0^t
				\| K_2(t,s) - K_2(t,\ens) \|^2
			~ds
			\leq C\dn^{2(\alpha_2 \wedge 1)};
		\end{equation*}
		\end{enumerate}
		
	\begin{enumerate}[label=$\mathbf{(B)}$]
		\item\label{eq:B} for all $s, t \in [0,T]$ and $x, y \in \R^d$, it holds that
		one has  $\|(b, \sigma)(0,0) \| \le C$,
		\begin{equation*}
			\| (b, \sigma) (t,x)- (b, \sigma)(t,y) \| \leq C |x-y|,
			~~\mbox{and}~~
			\|(b, \sigma)(t,x) - (b, \sigma)(s,x) \|
			\leq
			C|t-s|^{\alpha \wedge 1}(1+|x|).
		\end{equation*}
	\end{enumerate}

	\end{assumption}

	\begin{example} \label{exam:Kernel}
		Let 
			$K_1(t , s) = \big((t-s)^{H^1_{j,k}- 1 }\big)_{1 \le j, k \le d} \in \Md$,
			$K_2(t , s) = \big((t-s)^{H^2_{j,k}-\frac12 }\big)_{1 \le j, k \le d} \in \Md$,
			for some positive constants $\{ H^i_{j,k} ~i=1,2, ~1 \le j, k \le d \}$ taking value in $(0, + \infty)$.
		Then 
		it is easy to check that Conditions \ref{eq:A1}-\ref{eq:A4}  hold true with 
		$\alpha_i = \min( H^i_{j,k} ~: 1 \le j, k \le d)$, $i=1,2$,
		$\beta_1 \in (1, \frac{1}{1- \alpha_1 \wedge 1})$,
		$\beta_2 \in (1, \frac{1}{1-2 \alpha_2 \wedge 1})$,
		where $1/0 = \infty$  by  convention.
	\end{example}

\subsection{The Euler scheme}

	As for standard SDEs, the Euler scheme can be obtained by freezing the time between two time points $t^n_k$ and $t^n_{k+1}$  in Equation \eqref{eq:defX}.
	More precisely, for each $n \ge 1$, with $\ens$ defined in \eqref{eq:def_eta_n},
	the solution $X^n$ of the Euler scheme of \eqref{eq:defX} is given by
	\begin{equation} \label{eq:EulerScheme}
		X_t^n=X_0+\int_0^t K_1(t,\ens)~ b(\ens,X_{\ens}^n)~ds+\int_0^t K_2 (t,\ens) ~\sigma(\ens, X_{\ens}^n)~dW_s .
	\end{equation}

	\begin{remark} \label{rem:Euler_Implementation}
		In practice, we will only simulate the value of $X^n$ on the discrete-time grid $\pi_n = \{ t_k, ~k=0, 1, \dots, n\}$,
		and this can be achieved by simulations of the increment of the Brownian motion $\Delta W_{k+1} := W_{t_{k+1}} - W_{t_k}$, $k = 0, \dots, n-1$:
		let $\Delta t_{k+1} := t_{k+1} - t_k$,  $X^n_{t_0} := X_0$, and then
		$$
			X^n_{t_{k+1}}
			~=~
			X_{0}
			~+~
			\sum_{i=0}^{k} K_1\big(t_{k+1}, t_i \big) b(t_i,X^n_{t_i}) \Delta t_{i+1}
			~+~
			\sum_{i=1}^{k} K_2 \big(t_{k+1}, t_i \big) \sigma(t_i, X^n_{t_i} ) \Delta W_{i+1}.
		$$
	\end{remark}

	\begin{theorem}\label{thm:EulerScheme}
		Let Assumption \ref{assum:main} hold true with the constant $\alpha > 0$ defined therein.

		\vspace{0.5em}

		\noindent $(i)$  Let $p \geq  \max( \frac{\beta_1}{\beta_1 -1}, \frac{2\beta_2}{\beta_2 -1})$.
		There exists a constant $C_p \in (0, \infty)$ depending only on $T$, $d$, $p$, and $\beta_1$, $\beta_2$, $C$ in Assumption  \ref{assum:main}
		such that, for all $s, t \in [0,T]$ and $n \ge 1$,
		$$
			\EE \Big[ \big|X^n_t-X^n_s \big|^p \Big] 
			\leq C_p \big(1+\EE \big[|X_0|^p \big] \big)  | t - s |^{p (\alpha\wedge 1)}
			~~\mbox{and}~~
			\EE \Big[ \big| X^n_t-X_t \big|^p \Big]
			\leq
			C_p \big(1+\EE \big[|X_0|^p \big] \big) \dn^{p(\alpha\wedge 1)}.
		$$

		\noindent $(ii)$ Let, in addition, $p> (\alpha\wedge 1)^{-1}$.
		Then for all $\varepsilon\in(\frac{1}{p}, \alpha \wedge 1)$, there exists $C_{p, \varepsilon} \in (0, \infty)$ such that
		\begin{equation} \label{eq:CVG_rate_Euler}
			\Big( \EE \Big[ \sup_{t\in[0,T]}  |X^n_t-X_t|^p \Big] \Big)^{\frac 1p}
			~\leq~
			C_{p, \varepsilon} ~ \big(1+\EE \big[|X_0|^p \big] \big)^{\frac{1}{p}}
			~\dn^{(\alpha\wedge 1)-\varepsilon},
			~~\mbox{for all}~
			n \ge 1.
		\end{equation}
	\end{theorem}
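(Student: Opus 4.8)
The plan is to prove part $(i)$ first by a Gronwall-type argument, and then deduce part $(ii)$ from part $(i)$ by applying the Garsia--Rodemich--Rumsey (GRR) lemma. For part $(i)$, I would work directly with the Euler scheme \eqref{eq:EulerScheme} and the Volterra equation \eqref{eq:defX}. To estimate $\EE[|X^n_t - X^n_s|^p]$, split the difference into the drift term and the diffusion term; for the drift, apply Jensen's (or H\"older's) inequality with the exponent pair $(\beta_1, \frac{\beta_1}{\beta_1-1})$ to pull out $\int_s^t \|K_1(t,\ens)\| ds$ plus the "change of upper limit" piece $\int_0^s \|K_1(t,\ens) - K_1(s,\ens)\| ds$, bounding these via \ref{eq:A2} and \ref{eq:A3}; for the diffusion, use the Burkholder--Davis--Gundy inequality to reduce to an $L^{p/2}$-norm of $\int_0^t \|K_2(\cdot,\ens)\|^2 \|\sigma(\ens, X^n_{\ens})\|^2 ds$, then H\"older with $(\beta_2, \frac{\beta_2}{\beta_2-1})$ and again \ref{eq:A2}, \ref{eq:A3}. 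The condition $p \ge \max(\frac{\beta_1}{\beta_1-1}, \frac{2\beta_2}{\beta_2-1})$ is exactly what makes these conjugate exponents usable together with the a priori moment bound $\sup_{0\le t\le T}\EE[|X^n_t|^p] \le C_p(1+\EE[|X_0|^p])$, which itself follows from the same estimates plus a Gronwall argument (using $\beta_i > 1$ to ensure the relevant kernel integrals are finite and integrable in the outer variable). For the strong error $\EE[|X^n_t - X_t|^p]$, write $X^n_t - X_t$ as a sum of four terms: the drift with frozen kernel and frozen state versus true kernel and true state, similarly for the diffusion. Each is handled by adding and subtracting intermediate terms: the kernel-regularity terms $\|K_i(t,s) - K_i(t,\ens)\|$ are controlled by \ref{eq:A4} (giving the $\delta_n^{\alpha\wedge 1}$ rate), the time-regularity of $b,\sigma$ by \ref{eq:B} (giving $\delta_n^{\alpha\wedge 1}$), and the state terms $|X^n_{\ens} - X_{\ens}| \le |X^n_{\ens} - X^n_s| + |X^n_s - X_s|$ feed back into a Gronwall inequality using the first estimate of $(i)$. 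This is essentially the adaptation of Zhang \cite{Zhang} advertised in the introduction.

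For part $(ii)$, the key is the GRR lemma applied to the process $Y^n := X^n - X$ with the gauge function $\Psi(u) = u^p$ and $p(u) = u^{1/p + \text{(something)}}$. Concretely, set $f(t) := X^n_t - X_t$; part $(i)$ gives, after a triangle inequality ($|f(t) - f(s)| \le |X^n_t - X^n_s| + |X_t - X_s|$, the latter controlled by the known H\"older regularity of the true solution $X$ — which holds under the same assumptions, essentially by the $K_1 \equiv K_2 \equiv I_d$ special case of $(i)$ or by standard Volterra estimates), that $\EE[|f(t) - f(s)|^p] \le C_p(1+\EE[|X_0|^p]) |t-s|^{p(\alpha\wedge 1)}$. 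Meanwhile, from the second estimate in $(i)$, for each fixed $t$ we have $\EE[|f(t)|^p] \le C_p(1+\EE[|X_0|^p])\delta_n^{p(\alpha\wedge 1)}$. The GRR lemma with $\Psi(u)=u^p$ and $p(u)=u^{\alpha\wedge 1}$ gives a pointwise bound
\begin{equation*}
	|f(t) - f(s)| \le C_{p,\varepsilon}\, \Gamma^{1/p}\, |t-s|^{(\alpha\wedge 1) - 1/p},
\end{equation*}
where $\Gamma := \int_0^T\int_0^T \frac{|f(u) - f(v)|^p}{|u-v|^{p(\alpha\wedge 1)+1}}\,du\,dv$ satisfies $\EE[\Gamma] \le C_p(1+\EE[|X_0|^p])$ by Fubini and the H\"older estimate (this needs $p(\alpha\wedge 1) > 1$, i.e. $p > (\alpha\wedge 1)^{-1}$, for the double integral to converge — exactly the extra hypothesis of $(ii)$). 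Then $\sup_{t} |f(t)| \le |f(t_0)| + \sup_{s,t}|f(t)-f(s)| \le |f(t_0)| + C_{p,\varepsilon}\Gamma^{1/p} T^{(\alpha\wedge 1)-1/p}$ for any fixed grid point $t_0$; taking $L^p$-norms and using $\EE[|f(t_0)|^p] \le C_p(1+\EE[|X_0|^p])\delta_n^{p(\alpha\wedge 1)}$ together with $\EE[\Gamma]$ finite does \emph{not} immediately give a rate in $\delta_n$ for the supremum — so instead one interpolates: bound $\Gamma$ itself using the \emph{pointwise} error rate $\delta_n^{p(\alpha\wedge 1)}$ on one factor and the H\"older continuity on the other, or more cleanly, apply GRR to $f$ with a smaller exponent $\varepsilon' < \alpha\wedge 1$ and observe that $\EE[|f(u)-f(v)|^p] \le (C_p(1+\EE[|X_0|^p]))^{\theta}\cdot(C_p(1+\EE[|X_0|^p])\delta_n^{p(\alpha\wedge 1)})^{1-\theta}\cdot|u-v|^{p(\alpha\wedge 1)\theta}$ by interpolating the two bounds from $(i)$, choosing $\theta$ so that $p(\alpha\wedge 1)\theta = p\varepsilon' + 1 + (\text{margin})$; this produces the factor $\delta_n^{p((\alpha\wedge 1) - \varepsilon)}$ after optimizing, whence \eqref{eq:CVG_rate_Euler}.

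The main obstacle I anticipate is exactly this last interpolation step: getting the GRR machinery to produce a supremum-norm bound that still carries the \emph{full} rate $\delta_n^{(\alpha\wedge 1)-\varepsilon}$ rather than a degraded one. The naive application of GRR loses the rate because $\EE[\Gamma]$ is merely bounded, not small; the fix is to trade H\"older exponent against the power of $\delta_n$ via the two estimates in $(i)$, which forces the $\varepsilon$-loss and the restriction $\varepsilon \in (\frac1p, \alpha\wedge 1)$ (the lower bound $\varepsilon > \frac1p$ being the GRR integrability threshold, the upper bound being positivity of the exponent). A secondary technical point is establishing the a priori $p$-th moment bound on $X^n$ uniformly in $n$ and the H\"older regularity of the true solution $X$ with the same constants — both routine but needed as inputs, and both relying on the interplay between $\beta_i > 1$ and the lower bound on $p$.
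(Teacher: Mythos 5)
Your proposal matches the paper's proof in all essentials: part $(i)$ via a priori moment and H\"older bounds for $X$ and $X^n$ (H\"older/BDG/Minkowski plus Gr\"onwall, with $p \ge \max(\tfrac{\beta_1}{\beta_1-1},\tfrac{2\beta_2}{\beta_2-1})$ playing exactly the role you assign it), then a term-by-term decomposition of $X_t-X^n_t$ controlled by \ref{eq:A4}, \ref{eq:B} and a final Gr\"onwall step; and part $(ii)$ via the Garsia--Rodemich--Rumsey lemma applied to $Y^n=X-X^n$ after interpolating the H\"older-increment bound against the pointwise rate from $(i)$, with $\theta$ chosen so the increment exponent $p(\alpha\wedge1)\theta=p\varepsilon$ exceeds $1$ — which is precisely the paper's argument, including your diagnosis that the naive GRR application loses the rate and that the $\varepsilon>\tfrac1p$ threshold is the GRR integrability condition. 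The approach is correct and essentially identical to the paper's.
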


	\begin{remark} \label{rem:EulerScheme}
		$(i)$ When $K_1 = K_2 $ and they are equal to the identity matrix $I_d$,
		so that the Volterra equation \eqref{eq:defX} degenerates into a standard SDE and Assumption \ref{assum:main} holds with $\alpha = \frac{1}{2}$,
		the convergence rate result in Theorem \ref{thm:EulerScheme}.(i) is consistent with results on the strong error of Euler scheme for standard SDEs.

		\vspace{0.5em}
		
		\noindent $(ii)$ The convergence rate result in Theorem \ref{thm:EulerScheme}.(ii) is less general than that of the standard SDEs.
		The main reason is that the solution $X$ of \eqref{eq:defX} is not a semi-martingale in general,
		and the Burkholder-Davis-Gundy inequality fails in this context.
		We instead use the Garsia-Rodemich-Rumsey lemma to obtain an estimation of the strong error on the uniform convergence norm,
		and need to sacrifice a small $\varepsilon > 0$ in the convergence rate.	

		\vspace{0.5em}

		\noindent $(iii)$ A convergence rate result similar to \eqref{eq:CVG_rate_Euler} has also been given in Zhang \cite{Zhang}, but without an explicit expression of the rate.
		Their main idea is to consider a nested sequence of uniform discretizations $\{\Delta t_n := Tk / 2^n ~: k = 0, \dots, 2^n \}$ of the interval $[0,T]$,
		and then consider $X^n_t$ as a random variable indexed by $(X_0, t, \Delta_n)$.
		Using the (multi-dimensional) Kolmogorov’s continuity Theorem, they obtained a strong convergence rate $\Delta t_n^{(\alpha \wedge 1) - \eps}$ under the uniform convergence norm for some un undetermined $\eps$.
		After a careful examination of their proof, their $\eps$ needs to be taken in $( \frac{d+2}{p}, \alpha \wedge 1)$ instead of $(\frac1p, \alpha \wedge 1)$ in our results.
		In this sense, our convergence rate in \eqref{eq:CVG_rate_Euler} is better than that in \cite{Zhang}.
		
		\vspace{0.5em}
		
		Nevertheless, in the case that $\E[ |X_0|^p ]$ is finite for all $p \ge 1$, one can take the constant $p$ large enough, so that $\eps > 0$ can be arbitrarily small for both convergence rate results.
		But still, the technique of \cite{Zhang} requires a special nested uniform discretizations $\{Tk / 2^n ~: k = 0, \dots, 2^n \}$, while we can consider an arbitrary discrete time grid on $[0,T]$.
	\end{remark}

\subsection{The Milstein scheme}

	To obtain a higher order of convergence rate,
	we introduce a Milstein scheme.
	Let us first assume some additional conditions on the coefficient functions.
	
	\begin{assumption} \label{assum:main2}

		Let $\alpha_1 > 0$, $\alpha_2 > 0$, $\alpha > 0$, $C>0$ be the same constants as in Assumption \ref{assum:main}.
		Assume in addition that $\alpha_1 > \frac12$ so that $\alpha' :=  \min\big( \alpha_1, 2\alpha_2,  \alpha_1 + \alpha_2 -\frac12\big) /2 > 0$
		satisfies $\alpha' \le \alpha \le 2 \alpha'$.

		\begin{enumerate}[label=$\mathbf{(A5)}$]
		\item\label{eq:A6}
		For all $0 \le r \le r' \le t \le T$, it holds that 
		\begin{equation*} \label{eq:A6}
			\int_r^{r'} \|K_1(t,s)K_2(s,r)\|~ds
			~\leq~
			C (r'-r)^{2 \alpha' \wedge 1}.
		\end{equation*}
		\end{enumerate}
		
		\begin{enumerate}[label=$\mathbf{\widetilde{(B)}}$]
		\item \label{eq:Bt} The coefficient functions $b$ and $\sigma$ are in $ \mathcal{C}^{0,2}([0,T]\times \R^{d})$,
		and moreover, for all $s,t\in [0,T]$ and $x\in \R^{d}$, it holds that
		$$
			\big\| \nabla_x b(t,x) \big\| + \big\| \nabla^2_{xx} b(t,x) \big\| + \big\| \nabla_x \sigma(t,x) \big\| + \big\| \nabla^2_{xx} \sigma(t,x) \big\| 
			\le C,
		$$
		and
		\begin{equation*}
			\big| (b, \sigma)(t,x)- (b, \sigma)(s,x) \big|
			~\leq~
			 C |t-s|^{2\alpha' \wedge 1}(1+|x|).
		\end{equation*}
		
		\end{enumerate}

	\end{assumption}

	\begin{remark}
		In the context of Example \ref{exam:Kernel},
		when $\alpha_1 :=  \min( H^1_{j,k} ~: 1 \le j, k \le d) > \frac12$ and $\alpha_2 :=  \min( H^2_{j,k} ~: 1 \le j, k \le d) > 0$,
		Condition \ref{eq:A6} still holds true.
		Indeed, in this example, it follows by the Cauchy-Schwarz inequality that
		\begin{eqnarray*}
			\int_r^{r'} \| K_1(t,s) K_2(s, r) \| ds 
			&\le&
			C \int_r^{r'} (t-s)^{\alpha_1 - 1}(s-r)^{\alpha_2 -\frac12} ds \\
			&\le&
			C \sqrt{ \int_r^{r'} (t -s)^{2 \alpha_1 - 2} ds} \, \sqrt{\int_r^{r'} (s - r)^{2\alpha_2 -1} ds} \\
			&\le&
			C \sqrt{\big| (t-r')^{2\alpha_1-1} - (t-r)^{2\alpha_1 -1} \big| } ~(r'-r)^{\alpha_2}
			~\le~
			C(r'-r)^{2 \alpha' \wedge 1}.
		\end{eqnarray*}
	\end{remark}

	Recall that $\eta_n(s)$ is defined in \eqref{eq:def_eta_n},
	then by freezing the time in coefficient functions $(b, \sigma)$ (but not in $K_1, K_2$), and expanding $b$ and $\sigma$ in the space variable $x$,
	we obtain the following Milstein scheme for Equation \eqref{eq:defX}:
	\begin{eqnarray} \label{eq:MilsteinScheme}
		\Xb_t^n
		&=&
		X_0
		~+~
		\int_0^tK_1(t,s)\Big( b \big( \ens,\Xb_{\ens}^n \big)+ \nabla_x b \big( \ens, \Xb_{\ens}^n \big) \cdot A_s^{1,n}\Big)~ds \nonumber \\
		&&+\int_0^t K_2(t,s)\Big(\sigma \big( \ens, \Xb_{\ens}^n \big) + \nabla_x \sigma \big( \ens, \Xb_{\ens}^n \big) \cdot A_s^{n}\Big)~dW_s,
	\end{eqnarray}
	where
	$$
		A_s^{1,n} := \int_0^{\ens} \Big(K_2(s,r)-K_2(\ens,r)\Big)\sigma \big( \enr, \Xb_{\enr}^n \big)~dW_r,
	$$
	\begin{equation}\label{eq:defAn}
		A_s^{n}
		:=
		A_s^{1,n}
		+
		A_s^{2,n},
		~~\mbox{with}~~
		A_s^{2,n}
		:=
		\int_{\ens}^s K_2(s,r)\sigma \big( \enr, \Xb_{\enr}^n \big)~dW_r,
	\end{equation}
	and
	$$
		\nabla_x b(\cdot) \cdot A := \big( \langle \nabla_x b_i(\cdot),  A \rangle \big)_{1 \le i \le d}
		~~\mbox{and}~
		\nabla_x \sigma(\cdot) \cdot A := \big( \langle \nabla_x \sigma_{i,j} (\cdot), A \rangle \big)_{1 \le i, j \le d}.
	$$

	\begin{remark}
		Formally, the Milstein scheme   \eqref{eq:MilsteinScheme} is obtained by considering the first order Taylor expansion of $(b(t,x), \sigma(t,x))$ in  the space variable  $x$.
		Let us consider the points on the discrete-time grid $\pi_n = \{t_k ~: k =0, \dots, n \}$,
		then by \eqref{eq:defX}, one has
		\begin{align*}
			X_{t_{k+1}}-X_{t_k}
			=&
			\int_0^{t_k} \Big( K_1\big(t_{k+1},s\big) - K_1\big(t_k,s\big)\Big) b(s,X_s)~ds
			+
			\int_{t_k}^{ t_{k+1} } K_1\big( t_{k+1},s\big) b(s,X_s)~ds\\
			&+
			\int_0^{t_k}\Big( K_2\big(t_{k+1},s\big)- K_2\big(t_k,s\big)\Big) \sigma(s,X_s)~dW_s
			+
			\int_{t_k}^{t_{k+1}} K_2\big(t_{k+1},s\big) b(s,X_s)~dW_s,
		\end{align*}
		which, by Taylor expansion on $(b, \sigma)$, approximately equals to
		\begin{align*}
			&
			\int_0^{t_k} \Big( K_1(t_{k+1},s)-K_1(t_k,s)\Big)
			\Big(b(\ens,X_{\ens})+ \nabla_x b(\ens,X_{\ens}) \cdot  A^1_{s}\Big)~ds\\
			&+\int_{t_k}^{t_{k+1}}K_1(t_{k+1},s)\Big(b(\ens,X_{\ens})+ \nabla_x  b (\ens,X_{\ens}) \cdot A^1_{s}\Big)~ds\\
			&+\int_0^{t_k} \Big(K_2(t_{k+1},s)-K_2(t_k,s)\Big)\Big(\sigma(\ens,X_{\ens})+\nabla_x \sigma(\ens,X_{\ens}) \cdot A_s \Big)~
			dW_s\\
			&+\int_{t_k}^{t_{k+1}}K_2\big(t_{k+1},s\big)
			\Big(\sigma(\ens,X_{\ens})+ \nabla_x \sigma (\ens,X_{\ens}) \cdot A_s \Big)~dW_s,
		\end{align*}
		where
		$$
			A^1_s :=\int_0^{\ens} \!\!\!\! \Big(K_2(s,r)-K_2(\ens,r)\Big)\sigma(\eta_{n}(r),X_{\eta_{n}(r)})~dW_r,
			~~
			A_s := A^1_s+\int_{\ens}^s \!\!\!\!\! K_2(s,r)\sigma(\eta_{n}(r),X_{\eta_{n}(r)})~dW_r.
		$$
	\end{remark}

	\begin{remark} \label{rem:Milstein_time_freezing}
		In the Milstein scheme \eqref{eq:MilsteinScheme}, we do not freeze the second time variable $s$ for $K_i(t,s)$.
		In fact, in view of the last term in \eqref{eq:MilsteinScheme} and Condition \ref{eq:A4},
		replacing $K_2(t, s)$ by $K_2(t, \eta_n(s))$ would induce an $L^2$--error of the order
		$$
			\Big( \int_0^t  \big( K_2(t, s) - K_2(t, \eta_n(s) \big)^2 ds \Big)^{1/2}
			~\le~
			C \delta_n^{\alpha_2 \wedge 1},
		$$
		which is the same convergence as the Euler scheme (Theorem \ref{thm:EulerScheme}).
		In order to obtain an improvement of the convergence rate compared to the Euler scheme,
		we need to use $K_2(t,s)$ in place of $K_2(t, \eta_n(s))$ to construct the Milstein scheme.
		Notice that similar terms with freezing the second argument in $K$ have also appeared in the Hybrid scheme of \cite{BLP},
		where the authors suggested 
		$$
			\mbox{approximating}~
			\int_{-\infty}^t K(t,s) \sigma_s dW_s
			~~~\mbox{by}~~~
			\int_{t_0}^{t_1} K(t, \eta_n(s)) \sigma_{\eta_n(s)} dW_s + \int_{t_1}^t K(t, s) \sigma_{\eta_n(s)} dW_s,
		$$
		for some constants $t_0 < t_1 < t$.
		In particular, when the Hurst constant $\alpha$ is closed to $0$, the singularity problem could become important.
		With freezing the argument $s$ in $\sigma_s$ but not in $K(t,s)$ when $s \nearrow t$, 
		it has a better numerical performance.
		See also our numerical results in Section \ref{subsec:VOU}.
	\end{remark}
		
	\begin{remark} \label{rem:Milstein_implementation}
		$\mathrm{(i)}$ Let us consider the simulation of the Milstein scheme on the discrete-time grid $\{ t_k ~:k=0, \dots, n\}$,
		then the equation \eqref{eq:MilsteinScheme} and \eqref{eq:defAn} can be reduced to an induction system of finite number of random variables
		$\big\{ (B^k_i)_{i \in I_k}, ~k=0, 1, \dots, n \big\}$,
		where $B^k = (B^k_i)_{i \in I_k}$ is a function of $(W_s ~: s \in [0,t_k])$,
		which can be written as, for some functionals $f^k_{i,1}, f^k_{i,2}, f^k_{i,3}, f^k_{i,4}$,
		\begin{eqnarray} \label{eq:Bki}
			B^{k+1}_i
			&=&
			\int_{t_k}^{t_{k+1}} f^{k+1}_{i, 1} (B^k, s) ds
			+
			\int_{t_k}^{t_{k+1}} f^{k+1}_{i, 2}(B^k, s) \, dW_s \nonumber \\
			&&+
			\int_{t_k}^{t_{k+1}} \int_{t_k}^s  f^{k+1}_{i, 3}(B^k, r, s) \, dW_r \, ds
			+
			\int_{t_k}^{t_{k+1}} \int_{t_k}^s  f^{k+1}_{i, 4} (B^k, r, s) \, dW_r \, dW_s.
		\end{eqnarray}
		The challenge would be the simulation of the (correlated) double stochastic integrals
		$$
			\int_{t_k}^{t_{k+1}} \int_{t_k}^s  f^{k+1}_{i, 4} (B^k, r, s) \, dW_r\,  dW_s.
		$$
		In general, one may need to consider a finer discrete-time grid on $[t_k, t_{k+1}]$ to approximate the above integrals appearing
		in the induction expression of $B^{k+1}_i$.
		
		\vspace{0.5em}

		\noindent $\mathrm{(ii)}$ Nevertheless, in a first special case, 
		where $\sigma(t,x)$ is independent of $x$, so that $\nabla_x \sigma \equiv 0$,
		there is no double stochastic integral in the Milstein scheme \eqref{eq:MilsteinScheme} anymore.
		The problem reduces to the simulation of a fractional Brownian motion,
		which can be simulated exactly by computing the correlation of the increment of fractional Brownian motion.
		In a second special case, where $K_2 \equiv I_d$, 
		the double stochastic integral reduces to the form 
		$\int_{t_k}^{t_{k+1}}  (W_s - W_{t_k}) \, d W_s$,
		which can be simulated exactly when $d=1$, but is a L\'evy area when $d > 1$ as in the Milstein scheme for classical SDEs.
	\end{remark}

	\begin{theorem}\label{th:convMilstScheme}
		Let Assumptions \ref{assum:main} and \ref{assum:main2} hold true, and recall the constant $\alpha'> 0$ defined in Assumption \ref{assum:main2}.
		
		\vspace{0.5em}
		
		\noindent $(i)$ Let $p \geq \max\big( \frac{\beta_1}{\beta_1-1}, \frac{2\beta_2}{\beta_2 -1} \big)$.
		There exists a constant $C_p \in(0,\infty)$ depending only on $T$, $d$, $p$ and $\beta_1$, $\beta_2$, $C$ in Assumptions \ref{assum:main} and \ref{assum:main2} such that,
		for all $s, t \in [0,T]$ and $n \ge 1$,
		\begin{equation} \label{eq:Rate_MilstScheme}
			\EE \Big[ \big| \Xb_t^n-\Xb_s^n \big|^p \Big]  
			\leq 
			C_p  \big(1+\EE \big[|X_0|^p \big] \big) |t-s|^{p (\alpha' \wedge 1)}
			~~\mbox{and}~~
			\EE \Big[ \big| \Xb^n_t-X_t \big|^p \Big]  \leq C_p  \big(1+\EE \big[|X_0|^p \big] \big) \dn^{p (2\alpha' \wedge 1)}.
		\end{equation}
		
		\noindent $(ii)$ Let, in addition, $p> (2\alpha'\wedge 1)^{-1}$.
		Then for all $\varepsilon\in(\frac{1}{p}, 2 \alpha' \wedge 1)$, there exists $C_{p, \varepsilon} \in (0, \infty)$ such that
		$$
			\Big(\EE \Big[ \sup_{t\in[0,T]} \big| \Xb^n_t-X_t \big|^p \Big] \Big)^{\frac 1p}
			\leq
			C_{p, \varepsilon} \big(1+\EE \big[|X_0|^p \big] \big)^{\frac{1}{p}} \, \dn^{ (2\alpha'\wedge 1) - \varepsilon}.
		$$
	\end{theorem}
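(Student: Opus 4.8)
\ The plan is to mirror the structure of the proof of Theorem~\ref{thm:EulerScheme}: obtain the pointwise moment bounds and the time-regularity estimate of part $(i)$ by a Burkholder--Davis--Gundy plus Volterra--Gronwall argument, and then deduce the supremum-norm estimate of part $(ii)$ from the Garsia--Rodemich--Rumsey (GRR) lemma. The genuinely new ingredients relative to the Euler analysis are the auxiliary processes $A^{1,n}$, $A^{n}$ of~\eqref{eq:defAn}, the first-order Taylor expansion of $(b,\sigma)$ built into~\eqref{eq:MilsteinScheme}, and the cross-kernel condition~\ref{eq:A6}.

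For part $(i)$, I would first establish the a priori estimates $\sup_{n\ge1}\sup_{t\le T}\EE[|\Xb^n_t|^p]\le C_p(1+\EE[|X_0|^p])$ and, via Burkholder--Davis--Gundy together with~\ref{eq:A1}--\ref{eq:A4}, $\EE[|A^{1,n}_s|^p]+\EE[|A^{2,n}_s|^p]\le C_p(1+\EE[|X_0|^p])\dn^{p(\alpha_2\wedge1)}$ for $p$ large; both follow from the same Hölder-in-the-kernel scheme already used for~\eqref{eq:EulerScheme}, adapted from~\cite{Zhang}. I would also record the time-regularity $\EE[|X_t-X_{\eta_n(t)}|^p]\le C_p(1+\EE[|X_0|^p])\dn^{p(\alpha\wedge1)}$ of the exact solution, and prove the first estimate in~\eqref{eq:Rate_MilstScheme} for $\Xb^n$ exactly as for $X^n$, using~\ref{eq:A2}--\ref{eq:A3} and the bounds on $A^{1,n},A^{n}$. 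The heart of the matter is then to set $e_t:=\Xb^n_t-X_t$ and, after a first-order Taylor expansion of $(b,\sigma)$ around $(\eta_n(s),\Xb^n_{\eta_n(s)})$, decompose the defect of the integrands of~\eqref{eq:defX} and~\eqref{eq:MilsteinScheme} into four families: (a) time-freezing errors, kept at order $\dn^{2\alpha'\wedge1}$ by the Hölder-in-time bound in~\ref{eq:Bt}; (b) the discrepancy between the Milstein corrections $\nabla_x b\cdot A^{1,n}_s$, $\nabla_x\sigma\cdot A^{n}_s$ and the ``true'' first-order terms $\nabla_x b\cdot(X_s-X_{\eta_n(s)})$, $\nabla_x\sigma\cdot(X_s-X_{\eta_n(s)})$; (c) second-order Taylor remainders, quadratic in $X_s-\Xb^n_{\eta_n(s)}=(X_s-X_{\eta_n(s)})+e_{\eta_n(s)}$; and (d) a residual term linear in $e_{\eta_n(\cdot)}$. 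In (b), the part of $X_s-X_{\eta_n(s)}$ given by $\int_{\eta_n(s)}^s K_2(s,r)\sigma(r,X_r)\,dW_r$ (deliberately absent from $A^{1,n}$) is, inside the drift integral $\int_0^t K_1(t,s)(\cdot)\,ds$, treated by a stochastic Fubini: it becomes a single $dW_r$-integral whose integrand carries the kernel factor $\int_r^{r^{+}}\|K_1(t,s)K_2(s,r)\|\,ds$, where $r^{+}$ is the grid point of $\pi_n$ immediately after $r$, and~\ref{eq:A6} bounds this factor by $C\dn^{2\alpha'\wedge1}$; this is exactly why $K_1,K_2$ are \emph{not} frozen in~\eqref{eq:MilsteinScheme}. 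The other pieces of (b) (those with $K_2(s,r)-K_2(\eta_n(s),r)$, and the drift part of $X_s-X_{\eta_n(s)}$) are handled by~\ref{eq:A2}--\ref{eq:A4}, and (c) by the global bounds on $\nabla^2(b,\sigma)$ in~\ref{eq:Bt} combined with the a priori moment and regularity estimates above; the relations $2\alpha'\le\alpha_1$, $2\alpha'\le2\alpha_2$ and $\alpha'\le\alpha\le2\alpha'$ from Assumption~\ref{assum:main2} ensure that all of (a)--(c) are of order $\dn^{2\alpha'\wedge1}$ while (d) is linear in $|e_{\eta_n(\cdot)}|$. Feeding this into the Volterra--Gronwall lemma yields $\EE[|e_t|^p]\le C_p(1+\EE[|X_0|^p])\dn^{p(2\alpha'\wedge1)}$, and the full range $p\ge1$ follows by Jensen's inequality.

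For part $(ii)$, I would apply the GRR lemma to $t\mapsto e_t$ (which vanishes at $t=0$). The ingredient required is a mixed time-modulus estimate $\EE[|e_t-e_s|^p]\le C_p(1+\EE[|X_0|^p])\dn^{p((2\alpha'\wedge1)-\varepsilon)}|t-s|^{p\varepsilon}$, valid for every $\varepsilon\in(\tfrac1p,2\alpha'\wedge1)$; it is obtained by revisiting the estimates of part $(i)$ applied to the increment $e_t-e_s$---the integrand defects are the same, while the factors $\int_s^t\|K_2(t,r)\|^2\,dr$, $\int_0^s\|K_2(t,r)-K_2(s,r)\|^2\,dr$ (and their $K_1$-analogues) from~\ref{eq:A2}--\ref{eq:A3} are of order $|t-s|$ to a positive power, which can then be traded against the surplus power of $\dn$ coming from the pointwise bound of part $(i)$---together with the crude bound $|e_t-e_s|\le|e_t|+|e_s|$ when $|t-s|\ge\dn$. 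Plugging this modulus into the GRR lemma with the weight $|t-s|^{\mu}$ for some $\mu\in(\tfrac1p,\varepsilon)$, which is possible precisely because $p>(2\alpha'\wedge1)^{-1}$, gives $\EE[\sup_{t\le T}|e_t|^p]\le C_{p,\varepsilon}(1+\EE[|X_0|^p])\dn^{p((2\alpha'\wedge1)-\varepsilon)}$, as claimed.

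I expect the main obstacle to be family (b): one must carefully separate, inside both the $ds$- and the $dW_s$-integrals, the part of the true increment that the scheme reproduces through $A^{1,n}$, $A^{n}$ from the part it discards, and show by the stochastic-Fubini manipulation and~\ref{eq:A6} that the latter is $O(\dn^{2\alpha'\wedge1})$; at the same time one must keep the quadratic-in-$e$ remainders of (c) at the same order with a constant depending only on $\EE[|X_0|^p]$, which is where the hypothesis $\alpha_1>\tfrac12$ (whence $2\alpha'\le\alpha_1$ and $\alpha'\le\alpha$) is essential. A secondary difficulty is calibrating, in part $(ii)$, the trade-off between the powers of $\dn$ and of $|t-s|$ in the modulus estimate, so that the GRR step costs only an arbitrarily small $\varepsilon>\tfrac1p$.
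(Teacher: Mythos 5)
Your overall architecture is the paper's: a priori moment and H\"older estimates for $\Xb^n$ and for $A^{1,n},A^{2,n}$ (Proposition \ref{prop:Milstein}), a decomposition of the defect into time-freezing errors, Taylor remainders, a cross-kernel term treated by stochastic Fubini plus \ref{eq:A6}, and a linear Gronwall term; then GRR for part $(ii)$. In particular your identification of $\int_{\eta_n(s)}^s K_2(s,r)\sigma\,dW_r$ inside the drift integral as the piece requiring the Fubini manipulation and \ref{eq:A6} is exactly the paper's term $J_4$. There is, however, one genuine gap in part $(i)$: you Taylor-expand $b(s,X_s)$ and $\sigma(s,X_s)$ around $(\eta_n(s),\Xb^n_{\eta_n(s)})$, so your family (c) of second-order remainders is quadratic in $X_s-\Xb^n_{\eta_n(s)}=(X_s-X_{\eta_n(s)})+e_{\eta_n(s)}$ and hence contains $|e_{\eta_n(s)}|^2$. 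The resulting inequality for $\EE[|e_t|^p]$ then carries a superlinear source term involving $\EE[|e_{\eta_n(s)}|^{2p}]$, which is \emph{not} controlled by ``the a priori moment and regularity estimates above'': none of them bounds $\EE[|e|^{2p}]$ at the rate $\dn^{p(2\alpha'\wedge 1)}$, so the Gronwall argument does not close as written. It can be repaired by a bootstrap (first prove the cruder Euler-type rate $\EE[|e_t|^{2p}]\le C\dn^{2p(\alpha\wedge1)}$ for the Milstein scheme and then use $2(\alpha\wedge1)\ge 2\alpha'\wedge1$, valid since $\alpha\ge\alpha'$), but the paper sidesteps it entirely: it first splits off the Lipschitz difference $b(s,X_s)-b(s,\Xb^n_s)$ (term $J_1$, linear in $e_s$) and only Taylor-expands $b(\eta_n(s),\Xb^n_s)$ around $\Xb^n_{\eta_n(s)}$, so the quadratic remainder involves only the scheme's own increment $\Xb^n_s-\Xb^n_{\eta_n(s)}$, whose $2p$-moments are bounded a priori by Proposition \ref{prop:Milstein}. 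Relatedly, the paper compares the Milstein correction with $\Xb^n_s-\Xb^n_{\eta_n(s)}-A^n_s$, which has an explicit expression from the scheme and contains no $e$ at all, rather than with the true increment $X_s-X_{\eta_n(s)}$; this keeps every $e$-dependent term linear.

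On part $(ii)$, your plan of re-deriving increment estimates for $e_t-e_s$ and trading kernel factors against surplus powers of $\dn$ would work but is unnecessarily heavy: the paper obtains the mixed modulus
\begin{equation*}
\Big(\EE\big[|e_t-e_s|^p\big]\Big)^{1/p}\le C\,\big(1+\EE[|X_0|^p]\big)^{1/p}\,|t-s|^{(\alpha\wedge1)\theta}\,\dn^{(2\alpha'\wedge1)(1-\theta)}
\end{equation*}
in one line by writing $|e_t-e_s|=|e_t-e_s|^{\theta}|e_t-e_s|^{1-\theta}$ and bounding the first factor via the H\"older regularity of $X$ and $\Xb^n$ and the second via the pointwise rate of part $(i)$, before feeding the result into Corollary \ref{cor:GRR} with $\theta=\varepsilon(\alpha\wedge1)^{-1}$. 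You should adopt this interpolation; it also removes the need for your separate case $|t-s|\ge\dn$.
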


	\begin{remark} 
		When $K_1 = K_2 \equiv I_d$, the Volterra equation \eqref{eq:defX} degenerates into the standard SDE,
		and Assumptions \ref{assum:main} and \ref{assum:main2} hold with $\alpha_1 = 1$, $\alpha_2 = \frac12$ so that $\alpha = \alpha'  = \frac12$.
		In this case, one has $A^{1,n}_s \equiv 0$ and hence the term containing $\nabla_{x}b$ in \eqref{eq:MilsteinScheme} disappears.
		Consequently, our Milstein scheme \eqref{eq:MilsteinScheme} turns to be exactly the same as the Milstein scheme for standard SDEs studied in the literature.
		Moreover, the convergence rate \eqref{eq:Rate_MilstScheme} is also the same as that of the Milstein scheme for standard SDEs (see e.g. \cite[Chapter 7]{graham2013stochastic}).

		However, for the rate under the uniform convergence norm,
		it is less general due to the use of Garsia-Rodemich-Rumsey lemma in our technical proof.
	\end{remark}

\subsection{The (Multilevel) Monte-Carlo method and complexity analysis}
\label{subsec:MLMC}

	Let $f: C([0,T], \R^d) \to \R$ be a functional, Lipschitz under the uniform convergence norm. 
	We aim at estimating the expectation value:
	$$
		m 
		~:=~
		\E \big[ f (X_{\cdot}) \big].
	$$
	Based on $N$ copies of simulations 
	$\big\{ (X^{n,i}_{t_k})_{k = 0,1, \dots, n}, i = 1, \dots N \big\}$ of $X$
	on the  discrete-time grid $\pi_n = \{t_k ~: k=0, 1, \dots, n\}$,
	we can use linear interpolation to obtain $N$ continuous path $\Xh^{n,i}$ on $[0,T]$,
	and then obtain the Monte-Carlo estimator
	$$
		\widehat m^n_N 
		~:=~ 
		\frac 1N \sum_{i=1}^N f( \Xh^{n,i}_{\cdot}).
	$$
	Given $\eps > 0$, we will compute the number of operations a computer must perform to achieve an error of order $O(\eps)$ between $m$ and the corresponding Monte-Carlo estimator, such as $\widehat m^n_N$.
	The number of such operations is called computational cost or complexity of the algorithm.
	We will first study the Euler scheme  \eqref{eq:EulerScheme},
	and then, based on the convergence rate results for the Euler scheme,
	study the corresponding Multilevel Monte-Carlo (MLMC) method.

	\vspace{0.5em}

	Let us assume all the conditions in Theorem \ref{thm:EulerScheme}. 

\paragraph{The complexity and error analysis for the Euler scheme.}

	To simulate a path of solution $X^n$ to the Euler scheme \eqref{eq:EulerScheme} on the discrete-time grid $\pi_n = \{t_k ~: k=0, 1, \dots, n\}$,
	one needs to simulate $n$ increments of the Brownian motion $(\Delta W_k)_{k=1, \dots, n}$ and take the sum $O(n^2)$ times
	(see Remark \ref{rem:Euler_Implementation}).
	The complexity to simulate $N$ copies of paths of $(X^n_{t_k})_{k = 1, \dots, n}$ will then be
	$O( N n^2)$.
	
	\vspace{0.5em}

	Now to achieve an error of order $O(\eps)$ for any $\eps > 0$, we need to let both the discretization error and statistical error be of order $O(\eps)$. To control the statistical error, it is clear that
	one needs to set $N = O(\eps^{-2})$. As for the discretization error, Theorem \ref{thm:EulerScheme} implies that, with $\alpha_{\circ} > 0$ satisfying
	$$
		\begin{cases}
			\alpha_{\circ} = \alpha \wedge 1, &\mbox{if}~f(X_{\cdot})~\mbox{depends on}~(X_t)_{t \in \mathbb{T}}~\mbox{for a finite subset}~\mathbb{T}~\mbox{of}~[0,T],\\
			\alpha_{\circ} \in (0, \alpha \wedge 1), & \mbox{otherwise},
		\end{cases}
	$$
	one needs to set $n = O(\eps^{- \alpha_{\circ}^{-1}})$. 
	We summarize the previous discussion in the following proposition.
	\begin{proposition}
		Denote by $ \left(C_1(\eps)\right)_{\varepsilon>0}$ the complexity of the Monte-Carlo estimation $\widehat m^n_N$ of $m$ by the Euler scheme, then
		\begin{equation} \label{eq:Complexity_Euler}
			C_1(\eps)  \le C \eps^{- 2 - 2 \alpha_{\circ}^{-1} },
			~~\mbox{for some constant}~C>0.
		\end{equation}
	\end{proposition}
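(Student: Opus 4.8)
The plan is to combine the two error contributions — the statistical (Monte-Carlo) error and the discretization (bias) error — into a single bound on the mean-squared error of $\widehat m^n_N$, then optimize over the two free parameters $N$ (number of samples) and $n$ (number of time steps) subject to the constraint that the total error is $O(\eps)$, and finally read off the resulting cost from the $O(Nn^2)$ complexity of generating the samples (Remark \ref{rem:Euler_Implementation}). Concretely, I would write
$$
\EE\big[ \big( \widehat m^n_N - m \big)^2 \big]
~\le~
2\, \EE\big[ \big( \widehat m^n_N - \EE[\widehat m^n_N] \big)^2 \big]
~+~
2\, \big( \EE[\widehat m^n_N] - m \big)^2,
$$
and treat the two terms separately.

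First I would handle the statistical term. Since the $N$ copies $\Xh^{n,i}$ are i.i.d., $\mathrm{Var}(\widehat m^n_N) = \frac1N \mathrm{Var}\big( f(\Xh^{n,1}_\cdot) \big)$, and because $f$ is Lipschitz under the uniform norm and $\Xh^{n,1}$ is the linear interpolation of $(X^n_{t_k})$, the quantity $\mathrm{Var}\big( f(\Xh^{n,1}_\cdot) \big)$ is bounded uniformly in $n$ by a constant depending on $\EE[\sup_t |X^n_t|^2]$, which is finite and bounded in $n$ by Theorem \ref{thm:EulerScheme}.(i) (together with a standard estimate on the interpolation error, and using $1+\EE[|X_0|^2]<\infty$). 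Hence the statistical error is $O(N^{-1/2})$, so choosing $N = O(\eps^{-2})$ makes it $O(\eps)$.

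Next I would handle the bias. By the Lipschitz property of $f$,
$$
\big| \EE[\widehat m^n_N] - m \big|
~=~
\big| \EE\big[ f(\Xh^{n,1}_\cdot) - f(X_\cdot) \big] \big|
~\le~
C\, \EE\big[ \sup_{t\in[0,T]} | \Xh^{n,1}_t - X_t | \big]
~\le~
C\, \EE\big[ \sup_{t\in[0,T]} | X^n_t - X_t | \big] + C\, (\text{interpolation error}).
$$
In the generic case, Theorem \ref{thm:EulerScheme}.(ii) bounds $\EE[\sup_t|X^n_t-X_t|^p]^{1/p}$ by $C_{p,\eps'} \dn^{(\alpha\wedge 1)-\eps'}$ for any small $\eps'>0$, so (taking $p$ and using Jensen if needed) the bias is $O(\dn^{\alpha_\circ})$ for any $\alpha_\circ \in (0,\alpha\wedge 1)$; for a uniform grid $\dn = T/n$, this is $O(n^{-\alpha_\circ})$. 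In the special case where $f$ depends only on finitely many marginals $\mathbb{T}$, I would instead use Theorem \ref{thm:EulerScheme}.(i), which gives $\EE[|X^n_t - X_t|^p]^{1/p} \le C_p \dn^{\alpha\wedge 1}$ for each fixed $t$, summing over the finite set $\mathbb{T}$ to get bias $O(\dn^{\alpha\wedge 1})$, i.e.\ $\alpha_\circ = \alpha\wedge 1$. In either case, choosing $n = O(\eps^{-1/\alpha_\circ})$ makes the bias $O(\eps)$.

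Finally, with $N = O(\eps^{-2})$ and $n = O(\eps^{-1/\alpha_\circ})$, the $O(Nn^2)$ complexity becomes $O\big( \eps^{-2} \cdot \eps^{-2/\alpha_\circ} \big) = O\big( \eps^{-2-2\alpha_\circ^{-1}} \big)$, which is \eqref{eq:Complexity_Euler}. The only mildly delicate point is the passage from the $L^p$-error estimates of Theorem \ref{thm:EulerScheme} to an $L^1$ (or $L^2$) bound on $f(\Xh^{n})-f(X)$ including the piecewise-linear interpolation step: one must check that interpolating $(X^n_{t_k})$ does not worsen the rate, which follows from the Hölder-in-time moment bound $\EE[|X^n_t - X^n_s|^p] \le C_p|t-s|^{p(\alpha\wedge1)}$ in Theorem \ref{thm:EulerScheme}.(i) applied on each subinterval of length $\le \dn$. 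Everything else is the routine optimization sketched above; I expect no real obstacle beyond bookkeeping of constants.
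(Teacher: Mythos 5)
Your proposal is correct and follows essentially the same route as the paper: the paper likewise takes $N=O(\eps^{-2})$ to control the statistical error, $n=O(\eps^{-1/\alpha_\circ})$ to control the bias via Theorem \ref{thm:EulerScheme} (part (i) for finitely many marginals, part (ii) for path-dependent functionals), and multiplies by the $O(Nn^2)$ simulation cost from Remark \ref{rem:Euler_Implementation}. The extra details you supply (the MSE decomposition, the uniform-in-$n$ variance bound, and the control of the piecewise-linear interpolation error via the H\"older-in-time moment estimate) are exactly the bookkeeping the paper leaves implicit.
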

	
	\begin{remark}
		For Milstein scheme, if the terms $B^{k+1}_i$ in \eqref{eq:Bki} can be simulated with computation effort $O(1)$
		(e.g. when $\nabla_{x} \sigma \equiv 0$, or $K_2 \equiv I_d$, see Remark \ref{rem:Milstein_implementation}),
		the complexity to simulate $N$ copies of paths $(\Xb^n_{t_k})_{k=1, \cdots, n}$ of the Milstein scheme is also of order $O(N n^2)$.
		Then using the same argument for Euler scheme, together with the convergence rate of Milstein scheme in Theorem \ref{th:convMilstScheme}, 
		one can deduce that the complexity of the Milstein scheme is bounded by $C \eps^{-2 - 2/\alpha'_{\circ}}$ with 
	$$
		\begin{cases}
			\alpha'_{\circ}= 2 \alpha' \wedge 1, &\mbox{if}~f(X_{\cdot})~\mbox{depends on}~(X_t)_{t \in \mathbb{T}}~\mbox{for a finite subset}~\mathbb{T}~\mbox{of}~[0,T],\\
			\alpha'_{\circ} \in (0, 2\alpha' \wedge 1), & \mbox{otherwise}.
		\end{cases}
	$$
	When the terms $B^{k+1}_i$ in \eqref{eq:Bki} cannot be simulated exactly,
	its complexity analysis would depend on how to approximate $B^{k+1}_i$ terms in practice and 
	the corresponding approximation error. 
	\end{remark}

\paragraph{The MLMC method.}

	We adapt the MLMC method of Giles \cite{giles2008multilevel} to our context.
	Although the statement of \cite[Theorem 3.1]{giles2008multilevel} does not apply directly here, the arguments stay the same.
	Let $M \ge 2$ be some positive integer, set $n_{\ell} := M^{\ell}$ so that $h_{\ell} := M^{-\ell}T$ for all $\ell \in \N$.
	Let $X^{n_{\ell}}$ denote the numerical solution of the Euler scheme using the uniform discretization with step size $\Delta t := h_{\ell}$,
	and
	$$
		\Ph := f(X_{\cdot}),
		~~~
		\Ph_\ell := f(X^{n_{\ell}}_{\cdot}),~ \ell \ge 0,
	$$
	so that
	\begin{equation} \label{eq:def_P_l}
		\big| \E \big[ \Ph_{\ell} \big] - \E \big[ \Ph \big]  \big| \le C h^{\alpha_{\circ}}_{\ell},
		~\mbox{and}~
		\E \big[ \big| \Ph_{\ell}  - \Ph_{\ell - 1} \big|^2 \big] \le C h^{2 \alpha_{\circ}}_{\ell},
		~\ell \ge 1,
	\end{equation}
	for some constant independent of $\ell$.
	Notice that
	$$
		\E \big[ \Ph_L \big]
		~=~
		\E \big[ \Ph_0 \big]
		+ \sum_{\ell = 1}^{L} \E \big[ \Ph_{\ell} - \Ph_{\ell - 1} \big].
	$$
	To estimate $\E [\Ph_0]$, we simulate $N_0$ i.i.d. copies $(X^{n_0, i})_{i=1, \dots, N_0}$ of $X^{n_0}$ and use the estimator
	$$
		\Yh_0 ~:=~ \frac{1}{N_0} \sum_{i=1}^{N_0} f( \Xh^{n_0, i}_{\cdot}).
	$$
	To estimate $\E \big[ \Ph_{\ell} - \Ph_{\ell -1} \big]$ for $\ell \ge 1$,
	we simulate $N_{\ell}$ i.i.d. copies $(X^{n_{\ell},i}, X^{n_{\ell-1},i})$ of $(X^{n_{\ell}}, X^{n_{\ell-1}})$ and use the estimator
	\begin{equation} \label{eq:estimate_Yl}
		\Yh_{\ell} ~:=~ \frac{1}{N_{\ell}} \sum_{i=1}^{N_{\ell}} \Big( f( \Xh^{n_{\ell}, i}_{\cdot}) - f( \Xh^{n_{\ell-1}, i}_{\cdot}) \Big).
	\end{equation}
	Then our MLMC estimator for $\E [ f(X_{\cdot}) ]$ is given by
	$$
		\Yh ~:=~ \sum_{\ell = 0}^{L} \Yh_{\ell},
		~~\mbox{whose numerical computation effort is of order}~
		 \sum_{\ell = 0}^{L} N_{\ell} O( h_{\ell}^{-2}).
	$$
	To meet the error level $\eps > 0$,
	one can set $L \ge 1$ and $N_{\ell} \ge 1$ such that
	$$
		h^{\alpha_{\circ}}_L = O(\eps),
		~~~
		N_0^{-1} = O(\eps^{-2})
		~~\mbox{and}~~
		N_{\ell}^{-1}  \mathrm{Var}[  \Ph_{\ell} - \Ph_{\ell - 1} ] \le C N_{\ell}^{-1} h^{2 \alpha_{\circ}}_{\ell} = O(\eps^{2}),
		~\mbox{for}~\ell \ge 1,
	$$
	where the bound of $ \mathrm{Var}[  \Ph_{\ell} - \Ph_{\ell - 1} ]$ follows from \eqref{eq:def_P_l}.
	By direct computations, one obtains that the complexity of the MLMC estimator $\Yh$ is bounded, for some constant $C$ independent of $\eps$, by
	\begin{equation*} 
		C \sum_{\ell = 0}^{L} N_{\ell} h_{\ell}^{-2}
		\le
		C \sum_{\ell=0}^{L} \eps^{-2} h_{\ell}^{2(\alpha_{\circ} -1)}
		=
		\begin{cases}
			C | \log(\eps) | \eps^{-2}, &~\mbox{if}~ \alpha_{\circ} = 1,\\
			C \eps^{- 2 \alpha_{\circ}^{-1}  }, &~\mbox{if}~ \alpha_{\circ} < 1.
		\end{cases}
	\end{equation*}
	We summarize the previous discussion in the following proposition.
	\begin{proposition}
		Denote by $ \left(C_2(\eps)\right)_{\varepsilon>0}$ the complexity of the Multilevel Monte-Carlo estimation $\widehat Y$ of $m$, then 
		\begin{equation} \label{eq:Complexity_MLMC}
			C_2(\eps) 
			\le
			\begin{cases}
				C  | \log(\eps) | \eps^{-2}, &~\mbox{if}~ \alpha_{\circ} = 1,\\
				C \eps^{- 2 \alpha_{\circ}^{-1}}, &~\mbox{if}~ \alpha_{\circ} < 1,
			\end{cases}
						~~\mbox{for some constant}~C>0.
		\end{equation}
	\end{proposition}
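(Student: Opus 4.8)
The plan is to make the bias--variance--cost bookkeeping preceding the statement rigorous, following Giles's analysis \cite{giles2008multilevel} specialized to the strong-error bounds of Theorem \ref{thm:EulerScheme}. Set $\Ph_{-1}:=0$ and recall $\Yh=\sum_{\ell=0}^{L}\Yh_\ell$, where the samples used for the different $\Yh_\ell$ are independent across $\ell$ while, for each fixed $\ell\ge1$, $X^{n_\ell}$ and $X^{n_{\ell-1}}$ are coupled through one common Brownian path. Splitting the root-mean-square error as $(\E[|\Yh-m|^2])^{1/2}\le|\E[\Yh]-m|+(\mathrm{Var}[\Yh])^{1/2}$, I will pick $L$ and $(N_\ell)_{0\le\ell\le L}$ so that both terms are $O(\eps)$ (up to a harmless logarithm), and then estimate the resulting cost $\sum_{\ell=0}^{L}N_\ell\,O(h_\ell^{-2})$.

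\emph{Bias.} The telescoping identity gives $\E[\Yh]=\E[\Ph_L]$, so by the first bound in \eqref{eq:def_P_l} one has $|\E[\Yh]-m|\le C h_L^{\alpha_\circ}$ --- this is where Theorem \ref{thm:EulerScheme}(ii) and the Lipschitz property of $f$ are used, with part (i) used instead (and no $\eps$-loss) when $f$ depends only on $(X_t)_{t\in\mathbb T}$ for a finite subset $\mathbb T$ of the grid. It therefore suffices to take the least $L$ with $h_L^{\alpha_\circ}=M^{-L\alpha_\circ}T^{\alpha_\circ}=O(\eps)$, i.e. $L=\lceil\alpha_\circ^{-1}(\log M)^{-1}\log(\eps^{-1})\rceil+O(1)=O(\log(\eps^{-1}))$.

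\emph{Variance.} By independence across levels, $\mathrm{Var}[\Yh]=\sum_{\ell=0}^{L}N_\ell^{-1}\,\mathrm{Var}[\Ph_\ell-\Ph_{\ell-1}]$; the level-$0$ term is $O(N_0^{-1})$ since $\Ph_0$ has finite second moment (again by Theorem \ref{thm:EulerScheme} and the Lipschitz property of $f$), while for $\ell\ge1$ the second bound in \eqref{eq:def_P_l} gives $\mathrm{Var}[\Ph_\ell-\Ph_{\ell-1}]\le\E[|\Ph_\ell-\Ph_{\ell-1}|^2]\le C h_\ell^{2\alpha_\circ}$. Choosing $N_0=\lceil\eps^{-2}\rceil$ and $N_\ell=\lceil\eps^{-2}h_\ell^{2\alpha_\circ}\rceil$ makes every summand $O(\eps^2)$, so $\mathrm{Var}[\Yh]=O(\eps^2 L)=O(\eps^2\log(\eps^{-1}))$; the spurious logarithm is absorbed into the constants, or removed by the slightly finer allocation $N_\ell\propto\eps^{-2}L\,h_\ell^{2\alpha_\circ}$.

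\emph{Cost, and the main obstacle.} A single draw of $(X^{n_\ell},X^{n_{\ell-1}})$ on $\pi_{n_\ell}$ costs $O(n_\ell^2)=O(h_\ell^{-2})$ because of the nested-sum form of the Euler recursion (Remark \ref{rem:Euler_Implementation}), whence $C_2(\eps)\le C\sum_{\ell=0}^{L}N_\ell h_\ell^{-2}\le C\eps^{-2}\sum_{\ell=0}^{L}h_\ell^{2(\alpha_\circ-1)}$. If $\alpha_\circ=1$ each summand equals $1$, the sum is $O(L)$, and $C_2(\eps)=O(|\log\eps|\,\eps^{-2})$; if $\alpha_\circ<1$ the exponent $2(\alpha_\circ-1)$ is negative, the geometric sum is dominated by its last term $h_L^{2(\alpha_\circ-1)}\asymp\eps^{2-2/\alpha_\circ}$ (using $h_L^{\alpha_\circ}\asymp\eps$), and $C_2(\eps)=O(\eps^{-2/\alpha_\circ})$; in both cases the level-$0$ cost $N_0 h_0^{-2}=O(\eps^{-2})$ is dominated. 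The one step that really needs care --- the main obstacle --- is the coupled-increment bound $\E[|\Ph_\ell-\Ph_{\ell-1}|^2]\le C h_\ell^{2\alpha_\circ}$ underlying \eqref{eq:def_P_l}: since $f$ is Lipschitz for the sup norm and the two levels share the same noise, it reduces to $\E[\|\Xh^{n}_\cdot-X_\cdot\|_\infty^2]\le C\delta_n^{2\alpha_\circ}$, i.e. Theorem \ref{thm:EulerScheme}(ii) for $\|X^{n}-X\|_\infty$ together with a control of the piecewise-linear interpolation error $\|\Xh^{n}-X^{n}\|_\infty$ by the $L^2$ Hölder modulus of $X^n$ (furnished by the increment bound in Theorem \ref{thm:EulerScheme}(i), using that $\alpha_\circ$ is taken strictly below $\alpha\wedge1$); everything else is geometric-series bookkeeping.
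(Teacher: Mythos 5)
Your argument follows the paper's own derivation essentially verbatim: the same bias/variance split, the same choice of $L$ and of $N_\ell$ driven by \eqref{eq:def_P_l}, and the same geometric-sum evaluation of $\sum_\ell N_\ell h_\ell^{-2}$ in the two cases $\alpha_{\circ}=1$ and $\alpha_{\circ}<1$, so the proposal is correct at the same level of rigour as the text preceding the proposition. Your two side remarks --- that the coupled bound $\E[|\Ph_\ell-\Ph_{\ell-1}|^2]\le C h_\ell^{2\alpha_{\circ}}$ also needs the piecewise-linear interpolation error controlled through the H\"older increment estimate of Theorem \ref{thm:EulerScheme}(i), and that the naive allocation leaves a factor $L$ in the total variance (whose removal by inflating $N_\ell$ would turn the $\alpha_{\circ}=1$ cost into $\eps^{-2}(\log\eps)^2$, as in Giles's theorem) --- are points the paper passes over silently, but they do not change the route.
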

	The complexity result in \eqref{eq:Complexity_MLMC} consists of a significant improvement compared to $C_1(\eps)$ in \eqref{eq:Complexity_Euler}.

	\begin{remark}
		The MLMC method for the Milstein scheme \eqref{eq:MilsteinScheme}-\eqref{eq:defAn} seems also to be very interesting.
		Nevertheless, due to the implementation problems (see e.g. Remark \ref{rem:Milstein_implementation}), it seems less clear how to introduce an implementable algorithm.
		A possible approach would be extending the (antithetic) Milstein MLMC method of classical SDEs such as in \cite{giles2008improved, GilesSzpruch} to our context.
		We would like to leave this for future research.
	\end{remark}

\section{Numerical examples}
\label{sec:examples}

	We will implement the simulation methods introduced above, including the Euler scheme, Milstein scheme and the MLMC method,
	on three Volterra equations, including two affine equations and another equation from statistical mechanics.
	For the two affine equations, we are able to compute explicitly the reference values for comparison in some cases.
	
\subsection{A Volterra Ornstein-Uhlenbeck equation}
\label{subsec:VOU}

	We first consider a one-dimensional Volterra Ornstein-Uhlenbeck equation which is a special case of  Equation \eqref{eq:defX},
	where $b(x)=b_0+b_1x,~\sigma(x)\equiv\sigma_0$ for some constants $b_0,~b_1,~\sigma_0 \in \mathbb{R}$ and kernel $K(t,s) \equiv K(t-s)=(t-s)^{H-\frac{1}{2}}/\Gamma({H+\frac{1}{2}})$, that is,
	\begin{equation*}
		X_t=x_{0}+\int_0^t K(t-s)\, (b_0+b_1X_s)~ds+\int_0^t K(t-s)\, \sigma_0~dW_s.
	\end{equation*}
	The above Volterra Ornstein-Uhlenbeck equation appears naturally in many applications, for instance  in turbulence \cite{Chevillard}, or as a non-Markovian Langevin equation in statistical mechanics (see \cite{JaksicPilletLetter,Lutz}),
	and its solution can be computed explicitly (see e.g. \cite{jaber2019affine}):  
	\begin{equation*}
		X_t = \Big( 1-\int_0^t R_{b_1}(s) ~ds \Big) x_0 +b_0\int_0^tE_{b_1}(s)~ds +\sigma_0\int_0^t E_{b_1}(t-s)~dW_s,
	\end{equation*}
	where
	$$
		R_{b_1}(s)=-b_1s^{H-\frac{1}{2}}\sum_{n=0}^{\infty}\frac{(b_1 s^{H+\frac{1}{2}})^n}{\Gamma((n+1)(H+\frac{1}{2}))},
		~\mbox{and}~
		E_{b_1}(s)=s^{H-\frac{1}{2}}\sum_{n=0}^{\infty}\frac{(b_1 s^{H+\frac{1}{2}})^n}{\Gamma((n+1)(H+\frac{1}{2}))}.
	$$
 	Then $X_T$ is a Gaussian random variable, with
	$$
		\EE(X_T)= \Big(1-\int_0^T  R_{b_1}(s)~ds \Big)x_0+b_0\int_0^TE_{b_1}(s)~ds,
		~~\mbox{and}~
		\mbox{Var}(X_T)=\sigma_0^2\int_0^T E_{b_1}(T-s)^2~ds.
	$$
	We aim at estimating $\EE[(X_T-1)_+]$ by the methods introduced previously, namely the Euler scheme, Milstein scheme and the MLMC method.
	Notice that $\sigma(x) \equiv \sigma_0 $ is a constant, and hence the Milstein scheme can be easily implemented (see Remark \ref{rem:Euler_Implementation}).
	Let us choose parameters $x_0=1,~b_0=1$, $b_1=-0.5$, $T=1,~\sigma_0=0.2$. For these parameter values, since $X_{T}$ is Gaussian, one can evaluate $\EE[(X_T-1)_+]$ theoretically with arbitrary accuracy.  
	When $H= 0.1$, one has $\EE[(X_T - 1)_+] \approx 0.3978$;
	when $H=0.25$, one has $\EE[(X_T-1)_+] \approx 0.397202$;
	and  when $H =0.75$, one obtains $\EE[(X_T-1)_+] \approx 0.373444$, with an error smaller than $10^{-6}$ in all cases. 
	These will serve as reference values for our numerical tests. 

	\vspace{0.5em}

	For both Euler scheme and Milstein scheme, we use the uniform discretisation, with time step $\Delta t = T/n$. 
	We will test different values on the time discretisation parameter $n$, and for each test, we simulate $N = 10^4$ copies of $(X^n_T - 1)_+$.
	The statistical error is given by $\widehat \sigma_N /\sqrt{N}$, where $\widehat \sigma_N^2$ is the empirical variance of $(X^n_T - 1)_+$.
	
	\vspace{0.5em}
	
	For the implementation of the MLMC method (described in Section \ref{subsec:MLMC}), we choose $M = 4$ so that $n_{\ell} = 4^{\ell}$ and $h_{\ell} = T 4^{-\ell}$.
	Then for each (MSE) error level $\eps > 0$,
	we choose the maximum level $L$ and simulation numbers $N_{\ell}$ for $\ell = 0, \dots, L$, so that the discretization error and statistical error are both bounded by $\eps/\sqrt{2}$.
	Following the arguments in \cite{giles2008multilevel}, we choose the maximum level $L$ as follows:
	assume that (recall Theorem \ref{thm:EulerScheme}), for some constant $C> 0$,
	$$
		\E \big[\Ph -  \Ph_\ell \big]
		~\approx~
		C 4^{-\ell (\alpha \wedge 1)}, 
		~~\mbox{with}~ \alpha = H,
		~\Ph = (X_T - 1)_+, ~~\Ph_{\ell} = (X^{n_{\ell}} - 1)_+.
	$$
	Then one has
	$$
		\EE[ \Ph_\ell-\Ph_{\ell-1} ]
		~\approx~
		(4^{\alpha \wedge 1} -1)C 4^{-\ell(\alpha \wedge 1)}
		~\approx~
		(4^{\alpha \wedge 1}-1)\EE[\widehat{P}-\widehat{P}_\ell].
	$$
	Recall that $\Yh_{\ell}$ is the estimation of $\E \big[ \Ph_\ell-\Ph_{\ell-1} \big]$,
	for each numerical experiment,
	the constant $L$ is chosen as the smallest number satisfying
	\begin{equation} \label{eq:stopping_criterion}
		\max \big\{
		4^{-(\alpha \wedge 1)} \big|\Yh_{L-1}|,| \Yh_L \big| 
		\big\}
		< 
		(4^{(\alpha \wedge 1 )}-1) \frac{\varepsilon}{\sqrt{2}},
	\end{equation}
	in order to ensure (empirically) that the discretization error $\E \big[\Ph -  \Ph_L \big] \le \varepsilon/\sqrt{2}$.
	To choose $N_{\ell}$, we let $V_{\ell}$ denote the empirical variance of $\Ph_{\ell} - \Ph_{\ell -1}$ when $\ell \ge 0$ (with $\Ph_{-1} = 0$), the statistical error is measured by
	$$
		\sqrt{ \sum_{\ell=0}^L N_\ell^{-1}V_\ell}.
	$$
	Recall that the computation effort to estimate $\E \big[ \Ph_{\ell} - \Ph_{\ell -1} \big]$ is proportional to $N_{\ell} h_{\ell}^{-2}$,
	we choose $(N_{\ell} ~:\ell = 0, \dots, L)$ which minimizes the computational effort, under the statistical error constraint, that is,
	$$  
		\min_{N_{\ell}}
		\sum_{\ell=0}^L N_{\ell} h_\ell^{-2},
		~~\mbox{subject to}~
		\sum_{\ell=0}^L \frac{V_\ell}{N_\ell}\leq \frac{\varepsilon^2}{2}.
	$$
	This leads to the optimal choice of $N_{\ell}$:
	\begin{equation} \label{eq:estimate_Nl}
		N_{\ell} 
		~\approx~
		2\varepsilon^{-2}\sqrt{V_{\ell}h_\ell^2} \sum\limits_{m=0}^L\sqrt{V_m/h_m^2}.
	\end{equation}

	\begin{remark} \label{rem:MLMC_pseudocode}
	To summarize, the pseudo-code for MLMC method is given as follows:
		
	\vspace{0.2em}
		
	\noindent Step 1. Initiate with $L = 0$.
	
	\vspace{0.2em}
	
	\noindent Step 2. Set the initial value $N_L = 100$ and simulate $N_L$ copies of  $\Ph_L - \Ph_{L-1}$, and compute its empirical variance $V_L$.
	
	\vspace{0.2em}
		
	\noindent Step 3. Update the values $N_{\ell}$ for $\ell = 0, 1, \cdots, L$, using \eqref{eq:estimate_Nl}.
	
	\vspace{0.2em}
	
	\noindent Step 4. Simulate extra samples according to the updated values $(N_{\ell})_{\ell = 0 ,1 ,\cdots, L}$, 
	and update the estimators $(\Yh_{\ell})_{\ell = 0, 1, \cdots, L}$ by \eqref{eq:estimate_Yl}, 
	and compute the corresponding empirical variances $(V_{\ell})_{\ell = 0, 1, \cdots, L}$.
	
	\vspace{0.2em}
	
	\noindent Step 5. If $L < 2$ or \eqref{eq:stopping_criterion} is not true, set $L = L +1$ and go to Step 2.
	Otherwise, quit the program and return the estimation $\sum_{\ell = 0}^L \Yh_{\ell}$.
	\end{remark}

	The simulations are implemented in Python, and the simulation results, together with the reference values, are given in Tables  \ref{tab:affine_0}, \ref{tab:affine_1} and \ref{tab:affine_2}.
	We can observe the convergence of the Euler scheme and the Milstein scheme, as $n$ increases.
	When $H = 0.1$, the Euler scheme does not perform as well as the cases $H=0.25$ and $H=0.75$, but the Milstein scheme performs relatively well for all the cases.
	This should be due to the no-time-freezing on $K$ in the Milstein scheme, which is also consistent with the results in \cite{BLP} for their Hybrid scheme (see also Remark \ref{rem:Milstein_time_freezing}).
	The MLMC method converges well when $\varepsilon$ decreases, and it performs better (in terms of computation time) for the regular case when $H = 0.75$ than the singular case when $H=0.1$ or $H=0.25$.
	We also provide the computation time for different examples, which is however only indicative.
	Indeed, the computation time depends not only on the complexity of the scheme, but essentially on the way the schemes are implemented in a concrete programming language.
	For example, for the Euler scheme, when the number of simulations are given, one can use array operations in the simulation and computation of the estimator in Python.
	However, for the implementation of MLMC method, one needs to use loop operations (see Remark \ref{rem:MLMC_pseudocode}) which may take more time for the same complexity of computation in Python.

\begin{table}[!htbp]
\centering
\begin{tabular}{ | c || c | c | c |}
	\hline
	& Mean Value & Statistical Error  & Computation time(s)  \\ \hline
	Reference value & 0.397800& - & -  \\ \hline
	
	Euler Scheme {(n=8)} & 0.362125 & 0.001613 & 0.733955 \\ \hline
	
	Euler Scheme {(n=20)} & 0.374513 & 0.001707 & 1.632285\\ \hline
	 
	Euler Scheme {(n=40)} & 0.377491 & 0.001797 & 3.131211 \\ \hline
	
	Euler Scheme {(n=80)} & 0.385454 & 0.001896 & 6.086132 \\ \hline
	
	Milstein Scheme {(n=8)} & 0.399300 & 0.001690 & 5.753037 \\ \hline
	
	Milstein Scheme {(n=20)} & 0.392401 & 0.001684 & 34.479045  \\ \hline
	
	Milstein Scheme {(n=40)} &0.394541 & 0.001699 & 343.109104 \\ \hline
	
	MLMC {($\varepsilon=0.08$)} & 0.386223 & 0.009216 & 1.830342 \\ \hline
	
	MLMC {($\varepsilon=0.05$)} & 0.406156 & 0.00738 & 9.688824 \\ \hline
	
	MLMC {($\varepsilon=0.03$)} &0.393942 & 0.005917 & 204.701933 \\ \hline	
	\end{tabular}
	\caption{Numerical estimation of $\EE [(X_T-1)_+]$ under Volterra O-U model with $H=0.1$.}
	\label{tab:affine_0}
\end{table}

\begin{table}[!htbp]
\centering
\begin{tabular}{ | c || c | c | c |}
	\hline
	& Mean Value & Statistical Error  & Computation time(s)  \\ \hline
	Reference value & 0.397202 & - & -  \\ \hline
	
	Euler Scheme {(n=8)} & 0.387355 & 0.001628 & 0.681871 \\ \hline
	
	Euler Scheme {(n=20)} & 0.391123 & 0.001676 & 1.481485 \\ \hline
	 
	Euler Scheme {(n=40)} & 0.393647 & 0.001691 & 2.724405 \\ \hline
	
	Euler Scheme {(n=80)} & 0.395506 & 0.001718 & 5.356922 \\ \hline
	
	Milstein Scheme {(n=8)} & 0.406978 & 0.001670 & 5.532334 \\ \hline
	
	Milstein Scheme {(n=20)} & 0.399021 & 0.001649 & 30.704011  \\ \hline
	
	Milstein Scheme {(n=40)} &0.398021 & 0.001642 & 339.831242 \\ \hline
	
	MLMC {($\varepsilon=0.01$)} & 0.395004 & 0.003565 & 1.192638 \\ \hline
	
	MLMC {($\varepsilon=0.007$)} & 0.396933 & 0.002435 & 5.099278 \\ \hline
	
	MLMC {($\varepsilon=0.005$)} &0.396543 & 0.001679 & 26.984565 \\ \hline	
	\end{tabular}
	\caption{Numerical estimation of $\EE [(X_T-1)_+]$ under Volterra O-U model with $H=0.25$.}
	\label{tab:affine_1}
\end{table}

\begin{table}[!htbp]
	\centering
	\begin{tabular}{ | c || c | c | c |}
	\hline
	& Mean Value & Statistical error  & Computation time(s)  \\ \hline
	
	Reference value &0.373444&-&- \\ \hline

	Euler Scheme {(n=8)}    & 0.393690 & 0.001582 & 0.677893 \\ \hline

	Euler Scheme {(n=20)}    & 0.382303 & 0.001544 & 1.453350 \\ \hline

	Euler Scheme {(n=40)}    &	0.377869 & 0.001527 & 2.673218 \\ \hline
		
	Euler Scheme {(n=80)}    &	0.376531 & 0.001518 & 5.246249 \\ \hline

	Milstein Scheme (n=8)    &0.382396 & 0.001698 & 4.926303 \\ \hline

	Milstein Scheme (n=20)    & 0.378372 & 0.001693 & 17.486181 \\ \hline

	Milstein Scheme (n=40)    & 0.375271 & 0.001670 & 141.646946 \\ \hline

	MLMC ($\varepsilon=0.01$) &0.382145 & 0.004835 & 0.389519 \\ \hline

	MLMC ($\varepsilon=0.007$) &0.370043 & 0.003060 & 1.034552 \\ \hline

	MLMC ($\varepsilon=0.005$) &0.374237 & 0.002400 & 1.544744 \\ \hline
	\end{tabular}
	\caption{Numerical estimation of $\EE [(X_T-1)_+]$ under Volterra O-U model with $H=0.75$.}
	\label{tab:affine_2}
\end{table}

\subsection{ A Volterra equation arising in statistical mechanics}

	We  consider next an example of stochastic Volterra equation which originates from works in statistical mechanics (see e.g.  Jak{\v{s}}i{\'c} and Pillet \cite{JaksicPillet}). 
	Namely, let $(q,p)$ denotes the couple position-speed of a particle evolving in a heat bath in a one-dimensional space. Then, adapting slightly the equations of Hamiltonian mechanics, 
  the following equations can be derived
	\begin{equation*}
		\dot q_{t} = p_{t},
		~~~~~
		\dot p_{t} = V'(q_{t}) \Big(-1 - \lambda^2  \int_{0}^t K(t-s)^2~ V(q_{s})\, ds - \lambda  \int_{0}^t K(t-s) \, V(q_{s}) \, dW_{s} \Big),
	\end{equation*}
	for some Brownian motion $W$. 
	Now, provided that $V'$ does not vanish, rewriting the whole system in an augmented form with 
	$$
		X_t = \Big( q_{t}, ~p_t,  ~\frac{\dot p_{t}}{V'(q_{t})} \Big),
	$$
	yields the following stochastic Volterra equation
	\begin{equation}\label{eq:dissip}
		\begin{cases}
		&X^{(1)}_{t} = X^{(1)}_0 + \int_0^t X^{(2)}_{s}\, ds, \\
		&X^{(2)}_{t} = X^{(2)}_0 + \int_0^t X^{(3)}_{s}\, V'(X^{(1)}_{s})\, ds, \\
		&X^{(3)}_{t} =  -1 - \lambda^2  \int_{0}^t K(t-s)^2~ V(X^{(1)}_{s}) \, ds - \lambda  \int_{0}^t K(t-s) \, V(X^{(1)}_{s}) \, dW_{s} ~.
		\end{cases}
	\end{equation}
	
	We simulate the above Volterra equation with the following parameters: $X^{(1)}_0 = X^{(2)}_0 = 0$, $\lambda = 3$, $T =2$, 
	$K(t) = t^{H-1/2}$  with $H= 0.3$ or $H=0.7$, 
	and $V(x) := x + \alpha \cos(x) $ (which corresponds to the physical confining potential $\mathcal{V}(x) = \frac{x^2}{2} + \alpha \sin(x)$) with $\alpha = 0.1$.
	Notice that $X^{(1)}_t$ represents the position of the particle, $X^{(2)}$ represents the speed and $X^{(3)}$ the acceleration of the particle at time $t$.
	In Figure \ref{Fig1}, we provide a simulation of the paths of $(X, W)$ on time interval $[0,4]$ with different constants $H$, by using the Euler scheme with time steps number $n = 1000$.
	 As expected, the roughness of $X^{(3)}$ increases as $H$ decreases (in fact the H\"older regularity of $X^{(3)}$ is almost $H$). 
	As expected for Langevin-type dynamics, one can observe a mean-reversion phenomena in the simulation of 
	$(X^{(1)}, X^{(2)}, X^{(3)})$ in Figure \ref{Fig1}.

	\vspace{0.5em}
	
	We provide some estimation by Monte Carlo simulation on the 1st, 2nd and 3rd moments of the position, i.e. $\E[X^{(1)}_T]$, $\E \big[ (X^{(1)}_T)^2 \big]$ and $\E \big[ (X^{(1)}_T)^3 \big]$ for $T=2$,
	based on the Euler scheme and the corresponding MLMC method.
	For Euler scheme, each estimation is obtained with $N=10^4$ simulated copies of $X_{T}$, and 	the statistical errors are given by $\widehat \sigma_N /\sqrt{N}$, where $\widehat \sigma_N^2$ is the empirical variance of $(X^{(1)})^k$ for $k=1,2,3$ according to the case.
	We try different number of time steps $n$.	
	For the MLMC method, we use the same parameter $M=4$ and follow the same procedure as in Section \ref{subsec:VOU} to compute the estimations with different MSE error $\eps > 0$.

	\vspace{0.5em}

	The simulation results are provided in Tables \ref{tab:PositionSpeed_1}  and \ref{tab:PositionSpeed_3}.
	From the simulation results of the MLMC method with $\eps = 0.05$,
	for $H=0.3$, the estimated mean value, variance and skewness of $X^{(1)}_T$ are approximately $(0.81069, 0.17045, -0.18988)$,
	and for $H = 0.7$, the corresponding values are approximately $(-1.29678, 0.18895, -0.62810)$.
	It seems that the estimations from the Euler scheme simulation converge to these values as $n$ increases.



\begin{figure} 
	\begin{center}
	\includegraphics[width=14cm,height=9cm]{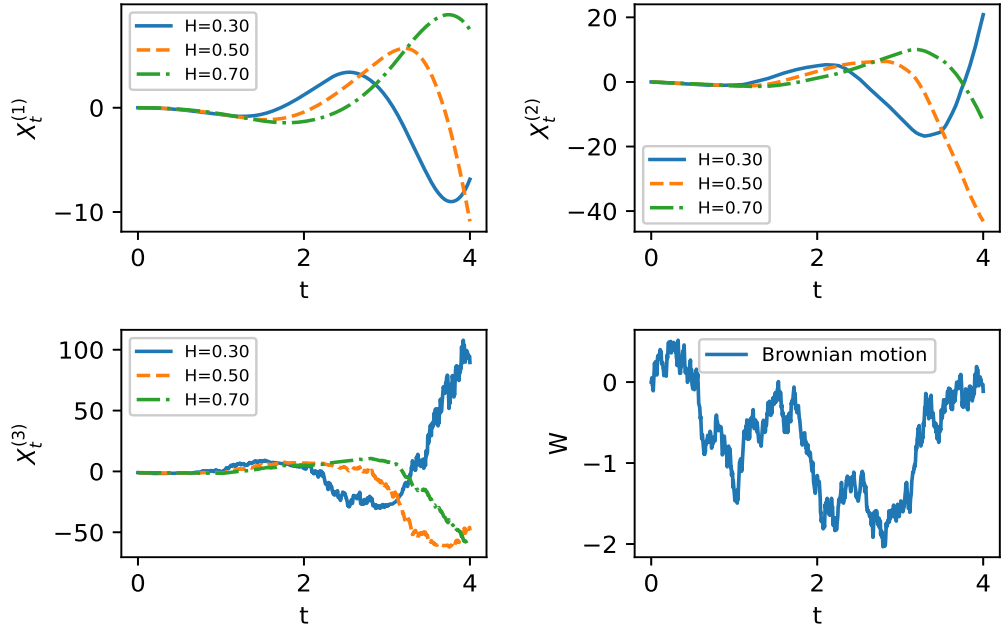}
	\end{center}
	\caption{A simulation of paths of $(X^{(1)}, X^{(2)}, X^{(3)}, W)$ on time interval $[0,4]$.}
	\label{Fig1}
\end{figure}

\begin{table}[!htbp]
\centering
	\begin{tabular}{ | c || c | c || c |c||c|c|}
	\hline
	&Mean($k=1$) & S. Error  &Mean($k=2$) & S. Error  & Mean($k=3$) &S. Error\\ \hline
	
	Euler Sch. {(n=100)} &
	0.678132 & 0.005183
	&  0.728080 & 0.007653
	&0.844422 & 0.013723 
	 \\ \hline

	Euler Sch. {(n=500)} &
	0.783380 & 0.004610
	&0.826746 & 0.007585 
	&0.948628 & 0.013016
	\\ \hline	
	
	Euler Sch. {(n=1000)} &
	0.790071 & 0.004441
	&0.825749 & 0.007319
	&0.947221 & 0.012555
	\\ \hline

	MLMC {($\varepsilon=0.1$)} & 
	0.775920 & 0.025893
	&  0.829574 & 0.041700
	&0.969688 & 0.039718
	 \\ \hline	
	 
	MLMC {($\varepsilon=0.07$)} & 
0.791636 & 0.015919 
	&  0.807127 & 0.027351
	&0.926160 & 0.026580
	 \\ \hline
	
	MLMC {($\varepsilon=0.05$)} &
	0.810689 & 0.013005
	& 0.827675 & 0.021849
	&0.934001 & 0.024089
	\\ \hline
	
	\end{tabular}
	\caption{Monte Carlo estimation of $\E  \big[ (X_T^{(1)})^k \big]$, with $H=0.3$ and $T=2.0$.}
	\label{tab:PositionSpeed_1}
\end{table}

\begin{table}[!htbp]
\centering
	\begin{tabular}{ | c || c | c || c |c||c|c|}
	\hline
	&Mean($k=1$) & S. Error  &Mean($k=2$) & S. Error  & Mean($k=3$) &S. Error\\ \hline
	
	Euler Sch. {(n=100)} &
	-1.382149 & 0.004254
	&  2.085036 & 0.012392 
	&-3.447007 & 0.031222
	 \\ \hline

	Euler Sch. {(n=500)} &
-1.315642 & 0.004440
	&1.927117 & 0.012459
	&-3.104342 & 0.031140
	\\ \hline	
	
	Euler Sch. {(n=1000)} &
-1.311788 & 0.004525 
	&1.882418 & 0.012361 
	&-2.992574 & 0.029891
	\\ \hline

	MLMC {($\varepsilon=0.1$)} & 
	-1.341978 & 0.033149
	&  1.939615 & 0.055981
	&-3.154188 & 0.053123
	 \\ \hline	
	
	MLMC {($\varepsilon=0.07$)} & 
	-1.346259 & 0.029277
	& 1.914329 & 0.034501
	&-2.961424 & 0.040303
	 \\ \hline
	
	MLMC {($\varepsilon=0.05$)} &
	-1.296778 & 0.024512
	&1.870580 & 0.027123
	&-2.967358 & 0.029538 
	\\ \hline
	
	\end{tabular}
	\caption{Monte Carlo estimation of $\E  \big[ (X_T^{(1)})^k \big]$, with $H=0.7$ and $T=2.0$.}
	\label{tab:PositionSpeed_3}
\end{table}

\subsection{The Rough Heston model }

	We  consider next the so-called rough Heston model introduced by El Euch and Rosenbaum \cite{ElEuchRosenbaum} (see also \cite{Jacquier} for a rough local volatility model).
	It consists of a two-dimensional equation, given by
	$$
		dS_t=S_t\sqrt{V_t}\, dW_t,
	$$
	\begin{equation} \label{eq:RoughVol}
		V_t=V_0+\int_0^tK(t-s)\, (\theta-\lambda V_s)~ds+\int_0^tK(t-s)\, \nu \sqrt{V_s}~dB_s,
	\end{equation}
	where W and B are two correlated Brownian motions, with constant correlation $\rho \in (-1,1)$, $\theta,~\lambda ,~\nu $ are positive constants, 
	and the kernel is given by $K(t-s)=(t-s)^{H-\frac{1}{2}}/\Gamma(H+\frac{1}{2})$.
	As the square root function $\sqrt{x}$ is defined only for $x \ge 0$,
	we shall replace $\sqrt{V_s}$ in \eqref{eq:RoughVol} by $\sqrt{ \max(0, V_s)}$ in the numerical implementation to avoid the technical problems when $V_s$ becomes negative in the simulation.
	Unfortunately, our main results require the Lipschitz condition on $\sigma$ (see Assumption \ref{assum:main}), 
	and hence they do not really apply to this case.
	We would like to leave this question for future research.
	We also refer to \cite{AbiJaberElEuch} for an approximation result on this model, where one approximate the kernel $K$ by a sequence of Markovian kernels $K_n$.

	\vspace{0.5em}

	We choose the following parameters: $\lambda=0.3,~\nu=0.3, H=0.1,~V_0=0.02,~\theta=0.02,~ \rho=-0.7$, $S_0 = 1$ and $T=1$.
	We will first estimate the European call option price with strike $1$, that is, $\EE[(S_T-1)_+]$,
	and then consider a path-dependent Asian option with payoff
	$$
		(A_T - 1)_+,
		~~\mbox{where}~
		A_T=\int_0^T S_t~dt.
	$$
	One can compute a reference value of $\EE[(S_T-1)_+]$. 
	As described in \cite{ElEuchRosenbaum}, the characteristic function of the log-price $X_t=\log(S_t/S_0)$ is given by
	$$
		\psi(z)=\EE[e^{izX_t}]
		= 
		\exp(\theta I^1 h(z,t)+V_0 I^{1-\alpha }h(z,t)),
	$$
	where $h(z,.)$ is a solution of the following fractional Ricatti equation
	$$
		D^{\alpha}h(z,t)=\frac{1}{2}(-z^2-iz)+(iz\rho \nu-1)h(z,t)+\frac{( \nu)^2}{2}h^2(z,t),~I^{1-\alpha}h(z,0)=0,
	$$
	with $D^{\alpha}$ and $I^{1-\alpha}$ the fractional derivative and integral defined by
	$$
		D^\alpha f(t)
		=
		\frac{1}{\Gamma(1- \alpha)}\frac{d}{dt}\int_0^t(t-s)^{-\alpha}f(s)~ds
		~~\mbox{and}~
		I^{1-\alpha} f(t)
		=
		\frac{1}{\Gamma(1- \alpha)}\int_0^t(t-s)^{- \alpha}f(s)~ds.		
	$$
	We use the fractional Adams method from \cite{diethelm2004detailed} to solve the above fractional Ricatti equation,
	and hence obtain the function $\psi(\cdot)$.
	Then $\EE[(S_T-1)_+]$ can be obtained by applying the inverse Fourier transform on $\psi(\cdot)$.
	
	\vspace{0.5em}

	However, for the (path-dependent) Asian option pricing $\E[ (A_T - 1)_+ ]$,
	it seems that there is no other method except the Monte Carlo simulation method.
	This is also one of our main motivations to consider the supremum norm error of the discrete time schemes.

	\vspace{0.5em}

	We will implement the Euler scheme and MLMC method.
	For the Euler scheme, we use the uniform discretization, with time step $\Delta t=T/n$ and denote the numerical solution by $(S^n, V^n)$.
	For each simulation, we simulate $N=10^5$ copies $(S^{n,i}, V^{n,i})_{i = 1, \dots N}$ of paths of $(S^n,  V^n)$,
	and then estimate $\E[(S_T - 1)_+]$ and $\E[ (A_T - 1)_+]$ by correspondingly
	$$ 
		\frac1N \sum_{i=1}^N  \big(S^{n,i}_T - 1 \big)_+,
		~~\mbox{and}~
		\frac1N \sum_{i=1}^N \big(A^{n,i}_T - 1 \big)_+,
		~~\mbox{with}~
		A_T^{n,i} ~:=~ \frac{T}{n} \sum_{k=1}^{n} S_{t_k}^{n,i}.
	$$

	For the MLMC method, as the convergence rate result is not available, due to the non-Lipschitz property of the function $\sqrt{x}$,
	we will provide the simulation result of the MLMC method with different level $L$.
	For a given level L, we fix the total simulation number $N=\sum_{\ell=0}^{L}N_\ell=10^5$,
	and then compute the optimal $(N_{\ell}, ~\ell =0, \dots, L)$ to minimize the statistical error under the constraint that $\sum_{\ell = 0}^L N_{\ell} = N$.
	This leads to the optimal allocation $N_\ell\approx \frac{\sqrt{V_\ell}}{\sum_{m=0}^L \sqrt{V_m}}N$, where $V_\ell$ is the (empirical) variance of $(\Ph_\ell- \Ph_{\ell-1})$.
	We choose $M= 4$ so that $h_\ell=4^{-\ell}T$.
	
	\vspace{0.5em}

	The simulation results for European call option and the Asian option are given in Table \ref{tab:3}.
	Again, one can observe the convergence of the Euler scheme as number $n$ of time steps increases, and that of MLMC method as the simulation level $L$ increases.
	For the estimation of the European call option price,
	the relative error of the Euler scheme estimation with $n=160$ is around 2\%, and that of the MLMC estimation with $L=4$ is around 1\%.

\begin{table}[!htbp]
    \centering
    \begin{tabular}{|c||c|c||c|c|}
    \hline
         & Mean(Call) &Stat. Error(Call) & Mean(Asian) & Stat. Error(Asian) 
         \\ \hline

        Reference  & 0.056832&-&-&-
        \\ \hline

        Euler Sch. (n=4) &0.059756 & 0.000245 & 0.040524 & 0.000169
        \\ \hline

        Euler Sch.  (n=10) &0.059138 & 0.000238 & 0.036344 & 0.000145
        \\ \hline

        Euler Sch.  (n=20) &0.058403 & 0.000234 & 0.034551 & 0.000136
        \\ \hline

        Euler Sch.  (n=40) & 0.058494 & 0.000232 & 0.033404 & 0.000131
        \\ \hline

        Euler Sch.  (n=80) & 0.058518 & 0.000232 & 0.033014 & 0.000128
        \\ \hline

         Euler Sch.  (n=160)& 0.058051 & 0.000230 & 0.032626 & 0.000128
	\\ \hline

	MLMC ($L=1$) &0.059875 & 0.000429 & 0.040321 & 0.000435 
	\\ \hline

        MLMC ($L=2$) &0.059249 & 0.000604 & 0.034407 & 0.000548
        \\ \hline

        MLMC ($L=3$) &   0.059014 & 0.000771 & 0.032762 & 0.000643
        \\ \hline

        MLMC ($L=4$) &  0.057497 & 0.000919 & 0.033050 & 0.000733
        \\ \hline
	\end{tabular}
	\caption{European call option and Asian option price estimation in the Rough Heston Model.}
	\label{tab:3}
\end{table}

\section{Proof of Theorems \ref{thm:EulerScheme} and \ref{th:convMilstScheme}}\label{sec:proofs}

	Throughout this section, $C>0$ is a generic constant, whose value may change from line to line.

\subsection{Proof of Theorem \ref{thm:EulerScheme}.(i)}

	The result and proof of Theorem \ref{thm:EulerScheme}.(i) are almost the same to Zhang \cite[Theorem 2.3]{Zhang},
	except that we provide an explicit expression of the convergence rate. 
	We give the proof for completeness, and more importantly, in order to provide this explicit rate. 
	This will also allow for a better presentation of our more original contributions (i.e. Theorem  \ref{thm:EulerScheme}.(ii) and Theorem \ref{th:convMilstScheme}) on the subject.
	
	\vspace{0.5em}

	Let us first repeat and adapt \cite[Lemmas 2.1 and 2.2]{Zhang}, by adding an explicit rate estimation.

	\begin{proposition}\label{prop:Euler}
		Let $p \geq \max( \frac{\beta_1}{\beta_1 -1}, \frac{2\beta_2}{\beta_2 -1})$.
		There exists a constant $C_p \in (0, \infty)$ depending only on $T$, $d$, $p$, and $\beta_1$, $\beta_2$, $C$ in Assumption  \ref{assum:main} such that,
		for all $s, t \in [0,T]$ and $n\ge 1$,
		$$
			\EE \Big[ \big|X_t \big|^p \Big]
			+
			\EE \Big[ \big|X^n_t \big|^p \Big]
			\leq C_p \big(1+\EE \big[|X_0|^p \big] \big),
		$$
		and
		$$
			\EE \Big[ \big|X_t-X_s \big|^p \Big]
			+
			\EE \Big[ \big|X^n_t-X^n_s \big|^p \Big]
			\leq C_p \big( 1+\EE \big[|X_0|^p \big] \big) | t - s |^{p(\alpha\wedge 1)}.
		$$
	\end{proposition}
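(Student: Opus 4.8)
The plan is to establish the bounds on the moments first and then to derive the increment bounds, handling $X$ and $X^n$ in parallel since the arguments are essentially identical (for $X^n$ one replaces $K_i(t,s)$ by $K_i(t,\eta_n(s))$ and $X_s$ by $X^n_{\eta_n(s)}$, and the hypotheses \ref{eq:A1}--\ref{eq:A4} are stated symmetrically in these two forms). First I would fix $p$ large enough as in the statement, apply the triangle inequality to \eqref{eq:defX}, raise to the power $p$, and use the elementary inequality $|a+b+c|^p \le 3^{p-1}(|a|^p+|b|^p+|c|^p)$. For the drift term I would apply H\"older's inequality with exponents $\beta_1$ and $\beta_1/(\beta_1-1)$ to pull out $\big(\int_0^t \|K_1(t,s)\|^{\beta_1}ds\big)^{p/\beta_1}$, which is finite by \ref{eq:A1}, leaving $\int_0^t \EE[|b(s,X_s)|^p]ds \le C\int_0^t(1+\EE[|X_s|^p])ds$ by the linear growth of $b$ coming from \ref{eq:B}. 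For the stochastic term I would use the Burkholder--Davis--Gundy inequality to bound the $p$-th moment by that of the quadratic variation $\big(\int_0^t \|K_2(t,s)\sigma(s,X_s)\|^2 ds\big)^{p/2}$, then H\"older with exponents $\beta_2$ and $\beta_2/(\beta_2-1)$ to extract $\big(\int_0^t \|K_2(t,s)\|^{2\beta_2}ds\big)^{p/(2\beta_2)}$, again finite by \ref{eq:A1}. This yields $\EE[|X_t|^p] \le C_p(1+\EE[|X_0|^p]) + C_p\int_0^t \EE[|X_s|^p]\,ds$, and a Gronwall-type argument closes the first estimate; one should take a sup over $t$ on a fixed interval or note that $t\mapsto \sup_{s\le t}\EE[|X_s|^p]$ is what Gronwall is applied to. The same computation with the frozen kernels and $X^n_{\eta_n(s)}$ in place of $X_s$ gives the bound for $X^n$, using that $\EE[|X^n_{\eta_n(s)}|^p]\le \sup_{u\le s}\EE[|X^n_u|^p]$.

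For the increment bound, suppose $s<t$ and write $X_t - X_s$ as a sum of four terms, following the decomposition already spelled out in the Milstein remark of the excerpt: the ``diagonal'' pieces $\int_s^t K_1(t,r)b(r,X_r)\,dr$ and $\int_s^t K_2(t,r)\sigma(r,X_r)\,dW_r$, and the ``memory'' pieces $\int_0^s\big(K_1(t,r)-K_1(s,r)\big)b(r,X_r)\,dr$ and $\int_0^s\big(K_2(t,r)-K_2(s,r)\big)\sigma(r,X_r)\,dW_r$. For each I would again apply H\"older (resp. BDG then H\"older) exactly as above, but now using Assumption \ref{eq:A2} to bound $\int_s^t\|K_1(t,r)\|^{\beta_1}dr$ by $C(t-s)^{\ldots}$-type quantities — more precisely one interpolates: write $\int_s^t\|K_1(t,r)\|^{\beta_1}dr \le \big(\int_s^t\|K_1(t,r)\|dr\big)^{?}$ is not directly available, so instead I would bound $\int_s^t\|K_1(t,r)\|^{\beta_1}dr$ using \ref{eq:A1} on $[0,t]$ combined with the estimate $\int_s^t\|K_1(t,r)\|dr\le C(t-s)^{\alpha_1\wedge 1}$ from \ref{eq:A2} via a H\"older-interpolation between the $L^1$ and $L^{\beta_1}$ norms, producing a power $(t-s)^{(\alpha_1\wedge 1)/\beta_1'}$ — and this is exactly where one must be careful about which exponent appears. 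The cleanest route, and the one I expect the paper takes, is: for the diagonal drift term use H\"older with the pair $(\beta_1, \beta_1')$ so the kernel contributes $\big(\int_s^t\|K_1(t,r)\|^{\beta_1}dr\big)^{1/\beta_1}$ and the remainder contributes $(t-s)^{1/\beta_1'}\sup_r\EE[|b(r,X_r)|^p]^{1/p}$; then bound $\int_s^t\|K_1(t,r)\|^{\beta_1}dr$ by combining \ref{eq:A1}-finiteness with the explicit $(t-s)$-decay — but since \ref{eq:A2} only controls the $L^1$ norm, one actually interpolates $\|K_1(t,\cdot)\|_{L^{\beta_1}(s,t)}\le \|K_1(t,\cdot)\|_{L^1(s,t)}^{\theta}\|K_1(t,\cdot)\|_{L^{\gamma}(s,t)}^{1-\theta}$ for a suitable larger exponent, the latter being uniformly bounded. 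The memory-piece kernels are handled identically but invoking \ref{eq:A3}. Collecting the four contributions gives, after choosing $p$ so the exponents on $(t-s)$ all dominate $p(\alpha\wedge 1)$, the bound $\EE[|X_t-X_s|^p]\le C_p(1+\EE[|X_0|^p])(t-s)^{p(\alpha\wedge 1)}$, where $\EE[|X_0|^p]$ enters through the already-proven moment bound. For $X^n$ one repeats this verbatim with the frozen kernels, whose increments in the first variable are controlled by the $K_i(t',\eta_n(s))$-versions of \ref{eq:A2} and \ref{eq:A3}.

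The main obstacle is the bookkeeping around the singular kernels: one cannot directly apply \ref{eq:A2}/\ref{eq:A3} (which bound only $L^1$-type integrals of $K_1$ and $L^2$-type integrals of $K_2$) together with \ref{eq:A1} (which gives $L^{\beta_1}$- and $L^{2\beta_2}$-integrability) without an interpolation step, and one must track precisely which fractional power of $(t-s)$ survives and verify that the constraint $p\ge \max(\tfrac{\beta_1}{\beta_1-1},\tfrac{2\beta_2}{\beta_2-1})$ is exactly what makes the time-regularity exponent come out to be $p(\alpha\wedge 1)$ rather than something smaller. A secondary technical point is making the Gronwall argument rigorous when $\beta_i$ are only slightly bigger than $1$ (so $p$ must be large), and ensuring the BDG constant and all H\"older constants depend only on the quantities listed ($T,d,p,\beta_1,\beta_2,C$) and not on $n$; the latter is automatic because the frozen-kernel bounds in \ref{eq:A2}--\ref{eq:A4} are uniform in $n$. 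I would present the $X$ case in full and then remark that the $X^n$ case is obtained by the substitution $K_i(t,s)\rightsquigarrow K_i(t,\eta_n(s))$, $X_s\rightsquigarrow X^n_{\eta_n(s)}$, invoking the $\eta_n$-versions of the kernel hypotheses at each step.
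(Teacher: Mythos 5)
Your overall architecture (moment bounds via H\"older/BDG/Gr\"onwall, then the four-term decomposition of $X_t-X_s$ into diagonal and memory pieces) matches the paper, and the first part of your argument is essentially the paper's proof. But there is a genuine gap in your treatment of the increment estimates. You propose to handle $\int_s^t K_1(t,r)b(r,X_r)\,dr$ by H\"older with the pair $(\beta_1,\beta_1')$ and then to extract decay in $(t-s)$ from $\int_s^t\|K_1(t,r)\|^{\beta_1}dr$ by interpolating between the $L^1$ bound of \ref{eq:A2} and an $L^{\beta_1}$ (or larger) norm. This cannot work as stated: log-convexity of $L^q$ norms only bounds $\|K_1(t,\cdot)\|_{L^q(s,t)}$ for $q$ \emph{strictly between} $1$ and $\beta_1$, while bounding the $L^{\beta_1}$ norm itself with quantitative smallness would require integrability beyond $L^{\beta_1}$, which \ref{eq:A1} does not provide. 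If you instead run H\"older with a pair $(q,q')$, $1<q<\beta_1$, and interpolate the $L^q$ norm between $L^1(s,t)$ and $L^{\beta_1}(s,t)$, the resulting exponent on $(t-s)$ is $(\alpha_1\wedge 1)\theta+(1-\theta)/\beta_1'$ with $\theta<1$, which is strictly less than $\alpha_1\wedge 1$; the limit $\theta\to 1$ is inaccessible because it would demand an $L^\infty$ bound on $b(r,X_r)$. So your route yields at best a rate $(t-s)^{p((\alpha\wedge1)-\eps)}$, not the exact $(t-s)^{p(\alpha\wedge1)}$ asserted in the Proposition — and this $\eps$-loss is precisely what the paper is structured to avoid (it would degrade Theorem \ref{thm:EulerScheme} as well).

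The missing tool is Minkowski's integral inequality, which is what the paper uses for all four increment terms. For the diagonal drift piece it gives directly
\begin{equation*}
\Big(\EE\Big[\Big|\int_s^t K_1(t,u)b(u,X_u)\,du\Big|^p\Big]\Big)^{1/p}
\le \int_s^t \|K_1(t,u)\|\,\big(\EE[|b(u,X_u)|^p]\big)^{1/p}\,du
\le C\big(1+\EE[|X_0|^p]\big)^{1/p}(t-s)^{\alpha_1\wedge 1},
\end{equation*}
using the uniform moment bound and the $L^1$ estimate of \ref{eq:A2} with no interpolation and no loss. For the stochastic pieces one applies BDG first and then Minkowski with exponent $p/2$ to the integral $\int_s^t\|K_2(t,u)\|^2|\sigma(u,X_u)|^2du$, reducing to the $L^2$ bounds in \ref{eq:A2} and \ref{eq:A3}. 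With this substitution your proof closes; note also that the hypothesis $p\ge\max(\tfrac{\beta_1}{\beta_1-1},\tfrac{2\beta_2}{\beta_2-1})$ is actually consumed in the \emph{moment-bound} step (one extra H\"older application is needed to convert $\big(\int_0^t|b(s,X_s)|^{\beta_1/(\beta_1-1)}ds\big)^{p(\beta_1-1)/\beta_1}$ into $\int_0^t\EE[|b(s,X_s)|^p]\,ds$, requiring $p(\beta_1-1)/\beta_1\ge1$, and similarly for $\sigma$), not in the increment bookkeeping as you suggest.
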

	\proof
	 Without loss of generality, let us assume that $\EE \big[ |X_0|^p \big]  < \infty$. Under more general conditions on $K_{1}$, $K_{2}$, $b$ and $\sigma$, the existence and uniqueness of $X$ in $L^p \left([0,T]\times \Omega\right)$ is established in the proof of Theorem 1.1 of \cite{Wang}, using a fixed point argument.
	
	\vspace{0.5em}
	
	\noindent $(i)$ Let us consider first the estimation of $\E[ |X_t|^p]$.
	
	\vspace{0.5em}
	
	Using \eqref{eq:defX},
	it follows by the H\"older inequality and the BDG inequality that
	\begin{eqnarray*}
		\EE \big[ |X_t|^p \big]
		&\leq&
		C \EE \big[ |X_0|^p \big]
		+
		C \EE \Big[ \Big| \int_{0}^{t}   K_{1}(t,s)b(s, X_s) ~ds\Big|^{p} \Big]\\
		&&
		+~
		C \EE \Big[ \Big| \int_{0}^{t}K_{2}(t,s)\sigma(s,X_s)~dW_{s}\Big|^{p} \Big]
		\\
		&\leq&
		C \EE \big[ |X_0|^p \big]
		+
		C \EE \Big[
			\Big( \int_0^{t} \big| K_1(t,s) \big|^{\beta_1} ds \Big)^{\frac{p}{\beta_1}}
			\Big(\int_0^{t} \big| b(s,X_s) \big|^{\frac{\beta_1}{(\beta_1-1)}} ds \Big)^{\frac{p(\beta_1-1)}{\beta_1}}
		\Big]
		\\
		&&+~
		C \EE \Big[ \Big( \int_0^{t }  \big |K_2(t,s) ~\sigma (s,X_s) \big|^2  ds \Big)^{\frac{p}{2}} \Big].
	\end{eqnarray*}
	Further, notice that $\sup_{t \in [0,T]} \big( |b(t,0)| + |\sigma(t,0)| \big)$ is uniformly bounded by the H\"older  continuity of $\big( b(t,0), \sigma(t,0) \big)$ in $t$ (Condition \ref{eq:B}).
	Using again the Lipschitz condition in Condition \ref{eq:B},
	one has $| b(s,x) | + | \sigma(s,x) |\leq C(1+|x|)$ for some constant $C > 0$.
	Then by using \ref{eq:A1} and  H\"older's inequality \big(recall that $\beta_1>1$, $\beta_2 > 1$ and $p \ge \max( \beta_1/(\beta_1-1), 2\beta_2 /(\beta_2 -1))$\big),
	one obtains a constant $C$ such that
	\begin{align*}
		\EE \big[ |X_t|^p \big]
		&\leq
		C \EE \big[ |X_0|^p\big]
		+
		C \EE \Big[ \int_0^{t} \big(1+|X_s|^p \big)~ds \Big]
		+
		C \EE \Big[ \Big(\int_0^{t} \big| \sigma(s,X_s) \big|^{\frac{2\beta_2}{\beta_2-1}}~ds  \Big)^{\frac{p (\beta_2 -1)}{2\beta_2}} \Big]
		\\
		&\leq
		C \big( 1+ \EE \big[|X_0|^p \big]  \big)
		+
		C \int_{0}^{t} \EE \big[ |X_{s}|^{p} \big] ds~.
	\end{align*}
	The result then follows by Grönwall's lemma.

~

	\noindent $(ii)$
	We  consider next the estimation of $\E \big[ |X^n_t|^p \big]$, where the proof is almost the same.
	Indeed, we have to consider here the integrals
	\begin{equation} \label{eq:seq_K_etan}
		\int_0^{t} |K_1(t,\ens)|^{\beta_1}~ds
		\quad \mbox{ and } \quad
		\int_0^{t} |K_2(t,\ens)|^{2\beta_2}~ds .
	\end{equation}
	These are Riemann sums which therefore converge, as $n\to \infty$, respectively to $\int_0^{t} |K_1(t,s)|^{\beta_1}~ds$ and  $\int_0^{t} |K_2(t,s)|^{2\beta_2}~ds$,
	which are finite real values by \ref{eq:A1}.
	As any convergent sequence of real numbers is uniformly bounded, the two sequences in \eqref{eq:seq_K_etan} are uniformly bounded in $n$.
	Then one can conclude as in $(i)$ that, for some constant $C$ independent of $n \ge 1$ and $t \in [0,T]$,
	$$
		\EE \big[ |X_t^n|^p \big]  \leq C \big (1+\EE \big[ |X_0|^p \big] \big).
	$$

	\noindent  $(iii)$ Let $s < t$ and denote $\delta := t-s$, we consider the term $ \E \big[ |X_t - X_s |^p \big]$.
	Let us rewrite
	\begin{eqnarray*}
		X_t-X_s
		&=&
		\int_{s}^t K_1 (t,u)b(u,X_u)~du+\int_{s}^t K_2(t,u)\sigma(u,X_u)\, dW_u \\
		&&
		+ \int_0^{s}\Big(K_1(t,u)b(u,X_u) -K_1 (s,u)b(u,X_u)\Big)\, du\\
		&&
		+ \int_0^{s}\Big(K_2(t,u)\sigma(u,X_u)-K_2(s,u)\sigma(u,X_u)\Big)\, dW_u \\
		&=:&
		I_1+I_2+I_3+I_4,
	\end{eqnarray*}
	and then consider  $I_1$, $I_2$, $I_3$, $I_4$ separately.
	
	\vspace{0.5em}
	
	\noindent For $I_1$, by applying Minkowski's integral inequality (see \cite[p.271]{Stein}) and condition \ref{eq:A2},
	it follows that
	\begin{align*}
		\EE \big[ |I_1|^p \big]
		&~\leq~
		\Big( \int_{s}^t |K_1(t,u)| ~\big( \E [ |b(u,X_u)|^p] \big)^{1/p} du \Big)^p
		~\leq~
		C  \big( 1 + \EE [ |X_0|^p ] \big) ~ \delta^{p (\alpha_1 \wedge 1)}.
	\end{align*}
	For $I_2$, we apply BDG's inequality, Minkowski's integral inequality, Conditions \ref{eq:A2}, and \ref{eq:B} on $\sigma$,
	it follows that
	\begin{eqnarray*}
		\EE \big[ |I_{2}|^{p} \big]
		&\leq&
		C\EE \Big[ \Big(\int_{s}^{t} | K_{2}(t,u)|^{2} ~ |\sigma(u,X_{u})|^{2}~du\Big)^{\frac{p}{2}} \Big]
		\leq
		C \bigg(\int_{s}^t\Big(\EE \Big[ |K_{2}(t,u)|^{p} |\sigma(u,X_{u})|^{p}  \Big]  \Big)^{\frac{2}{p}} ~du\bigg)^{\frac{p}{2}} \\
		&\leq&
		C \big(1+\EE \big[|X_0|^p \big]\big) \delta^{p (\alpha_2 \wedge 1)}.
	\end{eqnarray*}
	For $I_3$, we use Minkowski's integral inequality and Condition \ref{eq:A3} to obtain that
	\begin{align*}
		\EE|I_{3}|^{p}
		&~\leq~
		\Big(  \int_0^s  \big| K_1(t,u)-K_1(s,u)\big|  \big( \E \big[ | b(u,X_u)|^p \big] \big)^{1/p} du \Big)^p
		~\leq~
		C\big(1+\EE \big[|X_0|^p \big]\big) \delta^{p(\alpha_1 \wedge 1)}~.
	\end{align*}
	For $I_4$, we apply BDG's inequality, Minkowski's integral inequality  and use \ref{eq:A3} to obtain that
	\begin{align*}
		\EE|I_4|^p
		&~\leq~
		C\EE\Big(\int_0^{s}\big|K_2(t,u)-K_2(s,u)\big|^2 |\sigma(u,X_u)|^2~du\Big)^{\frac{p}{2}} \\
		&~\leq~
		C\bigg(\int_0^{s} \Big(\EE| K_2(t,u)-K_2(s,u) |^p|\sigma(u,X_u)|^p\Big)^{\frac{2}{p}}~du\bigg)^{\frac{p}{2}}
		~\leq~ C\big(1+\EE \big[|X_0|^p \big]\big) \delta^{p(\alpha_2\wedge 1)}.
	\end{align*}
	Then it follows that
	$$
		\EE \big[ |X_t-X_s|^p \big]
		~\leq~
		C \EE \big[ I_1|^p+| I_2|^p+| I_3|^p+| I_4|^p \big]
		~\leq~
		C \big(1+\EE \big[|X_0|^p \big]\big) (t-s)^{p (\alpha\wedge 1)}.
	$$

	\noindent $(iv)$ Finally, for the estimation of $\EE \big[ |X^n_t-X^n_s|^p \big]$, one can similarly write
	\begin{align*}
		X^n_t-X^n_{s} =&\int_{s}^t K_1 (t,\enu)b(\enu,X_{\enu})~du+\int_{s}^t K_2(t,\enu)\sigma(\enu,X_{\enu})~dW_u\\
		&+ \int_0^{s}\Big(K_1(t,\enu)b(\enu,X_{\enu}) -K_1 (s,\enu)b(\enu,X_{\enu})\Big)~du\\
		&+ \int_0^{s}\Big(K_2(t,\enu)\sigma(\enu,X_{\enu})-K_2(s,\enu)\sigma(\enu,X_{\enu})\Big)~dW_u\\
		&=:I^n_1 + I^n_2 + I^n_3 + I^n_4.
	\end{align*}
	Notice that the conditions in \ref{eq:A2} and \ref{eq:A3} are given also on 
	$$
		\int_t^{t'} \|K_i(t',\ens)\|^i~ds 
		~~\mbox{and}~~
		\int_0^{t} \|K_i(t+\delta,\ens)-K_i(t,\ens)\|^i~ds,
		~~i=1,2,
	$$
	one can apply the same arguments to obtain the estimations for
	$\E \big[ | I^n_1 |^p \big], \dots, \E \big[ | I^n_4 |^p \big]$.
	\qed

	\vspace{0.5em}

\noindent {\it Proof of Theorem \ref{thm:EulerScheme}.$(i)$}.
	Let us rewrite
	\begin{align*}
		X_t-X_t^n =&\int_0^t\Big(K_1(t,s)b(s,X_s)-K_1(t,\ens)b(\ens,X_{\ens}^n)\Big)~ds\\
		&+\int_0^t\Big(K_2(t,s)\sigma(s,X_s)-K_2(t,\ens)\sigma(\ens,X_{\ens}^n)\Big)~dW_{s} \\
		=&\int_0^t\Big(K_1(t,s)-K_1(t,\ens)\Big)b(s,X_s)~ds+\int_0^t
		K_1(t,\ens)\Big(b(s,X_s)-b(\ens,X_{\ens})\Big)~ds\\
		&+\int_0^t K_1(t,\ens)\Big(b(\ens,X_{\ens})-b(\ens,X_{\ens}^n)\Big)~ds\\
		&+\! \int_0^t \!\! \big(K_2(t,s)-K_2(t,\ens)\big)\sigma(s,X_s)dW_{s}
			+\! \int_0^t \!\! K_2(t,\ens)\big(\sigma(s,X_s)-\sigma(\ens,X_{\ens})\big)dW_s \\
		&+\int_0^tK_2(t,\ens)\Big(\sigma(\ens,X_{\ens})-\sigma(\ens,X_{\ens}^n)\Big)~dW_s\\
		=&: J_1+J_2+J_3+J_4+J_5+J_6,
	\end{align*}
	and then consider $J_1, \dots, J_6$ separately.
	
	\vspace{0.5em}

	\noindent For $J_1$, we use Minkowski's integral inequality, Proposition \ref{prop:Euler} and \ref{eq:A4} to obtain that
	\begin{align*}
		\EE \big[ |J_1|^p  \big]
		~&\leq~
		\Big( \int_0^t \big|K_1(t,s)-K_1(t,\ens)\big|~ \big( \E [|b(s,X_s)|^p] \big)^{1/p} ~ds\Big)^p
		~\leq~
		C \big(1+\EE \big[|X_0|^p \big]\big) ~\dn^{p(\alpha_1 \wedge 1)} .
	\end{align*}
	For $J_2$, notice that $\int_0^t |K_1(t,\ens)|~ds < \infty$  by \ref{eq:A2},
	then  it follows by  Minkowski's integral inequality together with Condition \ref{eq:B} and Proposition \ref{prop:Euler} that
	\begin{align*}
		\EE \big[ |J_2|^p \big]
		&\leq
			C\EE \Big[ \Big(\int_0^t\big|K_{1}(t,\ens)\big(b(s,X_s)-b(\ens,X_s)\big)\big|~ds\Big)^p \Big] \\
		&\quad +
			C\EE \Big[ \Big(\int_0^t\big|K_1(t,\ens)\big(b(\ens,X_{s})-b(\ens,X_{\ens}))\big|~ds\Big)^{p} \Big] \\
		&\leq
			C \Big( \int_0^t \big| K_1(t, \ens) \big| ~(s -\ens)^{\alpha\wedge 1} ~\big( \E [ ( 1 + |X_s|^p) ] \big)^{1/p} ~ds\Big)^p 
		\\
		&\quad +
			C \Big( \int_0^t \big| K_1(t, \ens) \big|~ \big( \E \big[ |X_s - X_{\ens}|^p \big] \big)^{1/p} ~ds\Big)^p 
		\\
		&\leq
			C \big(1+\EE \big[|X_0|^p \big]\big) ~\dn^{p(\alpha \wedge 1)}.
	\end{align*}
	For $J_3$, we obtain by H\"{o}lder's inequality and $\int_0^t |K_1(t,\ens)|^{\beta_1}~ds <\infty$ that
	\begin{align*}
		\EE \big[ |J_3|^p \big]
		&\leq
		C \EE \Big[ \Big( \int_0^t \big|X_{\ens}-X_{\ens}^n\big|^{\beta_1/(\beta_1-1)}~ds \Big)^{\frac{p (\beta_1 -1)}{\beta_1}} \Big]
 		~\leq~
		C\int_0^t\EE \big[ \big|X_{\ens}-X_{\ens}^n\big|^p \big]~ds.
	\end{align*}
	For $J_4$, it follows  by BDG's inequality, Minkowski's integral inequality and \ref{eq:A4} that
	\begin{align*}
		\EE \big[ |J_4|^p \big]
		&\leq
		C\EE \Big[ \Big( \int_0^t\Big|K_2(t,s)-K_2(t,\ens)\Big|^2|\sigma(s,X_s)|^2~ds \Big)^{\frac{p}{2}} \Big] \\
		& \leq
		C \Big(\int_0^t\Big(\EE\Big[ |\sigma(s,X_s)|^p\big|K_2(t,s)-K_2(t,\ens)\big|^p\Big] \Big)^{\frac{2}{p}}~ds \Big)^{\frac{p}{2}}
		~\leq~
		C \big(1+\EE \big[|X_0|^p \big]\big) \delta^{p(\alpha_2 \wedge 1)}.
	\end{align*}
	For $J_5$, we use BDG's inequality, Minkowski's integral inequality,
	Proposition \ref{prop:Euler}  and the H\"older regularity in time of $\sigma$ (Condition \ref{eq:B}) to obtain that
	\begin{align*}
		\EE \big[ |J_5|^p \big]
		&~\leq~
		C \EE \Big[ \Big( \int_0^t |K_2(t,\ens)|^2 \Big| \sigma(s,X_s)-\sigma(\ens,X_{\ens})\Big|^2~ds\Big)^{\frac{p}{2}} \Big]
		~\leq~
		C \big(1+\EE \big[|X_0|^p \big]\big)  \dn^{ p(\alpha\wedge 1)} .
	\end{align*}
	For $J_6$, we have by BDG's inequality and H\"{o}lder's inequality that, for $\beta_2>1$ that appears in \ref{eq:A1},
	\begin{align*}
		\EE \big[ |J_6|^p \big]
		&\leq
		\EE \bigg[ \bigg(\int_0^t |K_2(t,\ens)|^2\Big|\sigma(\ens,X_{\ens})-\sigma(\ens,X_{\ens}^n)\Big|^2~ds\bigg)^{\frac{p}{2}} \bigg] \\
		&\leq
		\EE \bigg[ \left(\int_0^t |K_2(t,\ens)|^{2\beta_2} ~ds\right)^{\frac{p}{2\beta_2}}
			\left( \int_{0}^t \Big|\sigma(\ens,X_{\ens})-\sigma(\ens,X_{\ens}^n)\Big|^{\frac{2\beta_2}{\beta_2-1}}~ds\right)^{\frac{p(\beta_2-1)}{2\beta_2}} \bigg] \\
		& \leq C\int_{0}^{t}\EE|X_{\ens}-X_{\ens}^{n}|^{p}~ds,
	\end{align*}
	where in the last lign we used again H\"older's inequality with $p\geq \frac{2\beta_2}{\beta_2-1}$.
	
	\vspace{0.5em}
	
	Combining all the above estimations, it follows that
	$$
		\EE \big[ |X_{t}-X_t^n|^p \big]
		~\leq~
		C \big(1+\EE \big[|X_0|^p \big]\big) \dn^{p (\alpha \wedge 1)}+C\int_0^t \sup\limits_{u\in [0,s]}\EE \big[ |X_u-X_u^n|^p \big] ds.$$
	Then by Grönwall's Lemma, we conclude that
	$\sup\limits_{t\in [0,T]}\EE \big[ |X_{t}-X_{t}^{n}|^{p} \big]  \leq C \big(1+\EE \big[|X_0|^p \big]\big) \dn^{p (\alpha \wedge 1)} $ for some constant $C > 0$ independent of $n$ and $X_0$.
	\qed

\subsection{Proof of Theorem \ref{th:convMilstScheme}.$(i)$}

	We now consider the solution $\Xb^n$ to the Milstein scheme \eqref{eq:MilsteinScheme}.
	 For ease of presentation, we consider the one-dimensional case with $d=1$,
	and write $b'$ (resp. $\sigma'$) in place of $\nabla_x b$ (resp. $\nabla_x \sigma$).
	The high-dimensional case will only change the generic constant $C$ depending on $d$. 
	Similarly to Proposition \ref{prop:Euler}, we first provide some related \emph{a priori} estimations.

	\begin{proposition}\label{prop:Milstein}
		Let Assumptions \ref{assum:main} and \ref{assum:main2} hold true,
		and $p \geq \max( \frac{\beta_1}{\beta_1 -1}, \frac{2\beta_2}{\beta_2 -1})$.
		Then there exists a constant $C_p \in(0,\infty)$ depending only on $T$, $d$, $p$ and $\beta_1$, $\beta_2$, $C$ in Assumptions \ref{assum:main} and \ref{assum:main2} such that, for all $s, t \in [0,T]$ and $n \ge 1$,
		\begin{equation} \label{eq:apriori_Milst}
			\EE \Big[ \big| \Xb_{t}^n \big|^p \Big] \leq C_p \big(1+\EE \big[|X_0|^p \big]\big),
			~~~
			\EE \Big[ \big| \Xb_t^n-\Xb_s^n \big|^p \Big] \leq C_p \big(1+\EE \big[|X_0|^p \big]\big) |t - s|^{p (\alpha \wedge 1)},
		\end{equation}
		and
		\begin{equation} \label{eq:apriori_Milst2}
			\EE \Big[ \big|A_s^{1,n} \big|^p\Big] + \EE \Big[ \big|A_s^{2,n} \big|^p\Big]
			\leq 
			C_p \big(1+\EE \big[|X_0|^p \big]\big) ~ \dn^{p(\alpha \wedge 1)} .
		\end{equation}
	\end{proposition}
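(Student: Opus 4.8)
The plan is to follow the structure of the proof of Proposition~\ref{prop:Euler}, the only genuinely new feature being the auxiliary processes $A^{1,n}$, $A^{2,n}$ and the two extra terms they produce in \eqref{eq:MilsteinScheme}. Write $M_n(t):=\sup_{u\le t}\EE\big[|\Xb^n_u|^p\big]$. I would first argue by a routine induction over the grid intervals that $M_n(t)<\infty$: on $[t^n_k,t^n_{k+1})$ the process $\Xb^n$ is a stochastic integral whose integrands involve only the already-controlled grid values $\Xb^n_{t^n_j}$ ($j\le k$) and the processes $A^{1,n}$, $A^{2,n}$, which are themselves stochastic integrals amenable to the BDG and Minkowski integral inequalities (requiring only the $L^2$--bounds on kernels available from \ref{eq:A1} and \ref{eq:A3}). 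This finiteness is what allows Grönwall's lemma to be applied below.

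\emph{Step 1 (a priori bounds on the $A$--processes).} I would apply BDG and Minkowski's integral inequality to $A^{2,n}_s=\int_{\ens}^s K_2(s,r)\sigma(\enr,\Xb^n_{\enr})\,dW_r$, use the linear growth of $\sigma$, and bound $\int_{\ens}^s\|K_2(s,r)\|^2\,dr\le C(s-\ens)^{2(\alpha_2\wedge 1)}\le C\dn^{2(\alpha_2\wedge 1)}$ by Condition \ref{eq:A2} (with $t=\ens$, $t'=s$), obtaining $\EE[|A^{2,n}_s|^p]\le C(1+M_n(s))\dn^{p(\alpha_2\wedge 1)}$. For $A^{1,n}_s$ the same reasoning applies with Condition \ref{eq:A3} (base point $\ens$, increment $\delta=s-\ens\le\dn$), giving $\int_0^{\ens}\|K_2(s,r)-K_2(\ens,r)\|^2\,dr\le C\dn^{2(\alpha_2\wedge 1)}$ and hence the same bound for $A^{1,n}_s$. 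Since $\alpha=\alpha_1\wedge\alpha_2\le\alpha_2$ and $\dn\le T$, both are $\le C(1+M_n(s))\dn^{p(\alpha\wedge 1)}$.

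\emph{Step 2 (moment bound, then \eqref{eq:apriori_Milst2}).} I would insert \eqref{eq:MilsteinScheme} into the same term-by-term estimates as in Proposition~\ref{prop:Euler} (H\"older with exponents $\beta_1$ and $2\beta_2$, BDG, Minkowski). The terms not involving $A$ are treated verbatim. For the two new terms $\int_0^tK_1(t,s)b'(\ens,\Xb^n_{\ens})\,A^{1,n}_s\,ds$ and $\int_0^tK_2(t,s)\sigma'(\ens,\Xb^n_{\ens})\,A^n_s\,dW_s$ I would use the boundedness of $b'$, $\sigma'$ from Assumption \ref{eq:Bt} to dominate the integrands by $C|A^{1,n}_s|$, $C|A^n_s|$, then Step~1 to bound their $L^p$--norms by $C(1+M_n(s))^{1/p}$. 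This yields $\EE[|\Xb^n_t|^p]\le C(1+\EE[|X_0|^p])+C\int_0^t M_n(s)\,ds$; since the right side is nondecreasing, the same holds with $M_n(t)$ on the left, and Grönwall gives the first bound in \eqref{eq:apriori_Milst} with a constant of the required form. Feeding this uniform bound back into Step~1 gives \eqref{eq:apriori_Milst2}.

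\emph{Step 3 (time increments) and the main difficulty.} For $\EE[|\Xb^n_t-\Xb^n_s|^p]$ I would (reducing to $|t-s|\le 1$, as the complementary case is immediate from the moment bound) decompose $\Xb^n_t-\Xb^n_s$ exactly as in part~$(iv)$ of the proof of Proposition~\ref{prop:Euler}, into a ``new interval'' contribution $\int_s^t$ and a ``kernel increment'' contribution over $[0,s]$ of $K_i(t,\cdot)-K_i(s,\cdot)$, each now also carrying the $A$--summands; since $\|A^{1,n}_u\|_{L^p}$, $\|A^n_u\|_{L^p}$ are now uniformly bounded and $b'$, $\sigma'$ are bounded, those summands contribute at most $C(1+\EE[|X_0|^p])|t-s|^{p(\alpha_2\wedge 1)}$ via Conditions \ref{eq:A2}--\ref{eq:A3} and BDG, so the overall H\"older exponent is $\alpha\wedge 1$. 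The points requiring care are (a) the initial finiteness $M_n(t)<\infty$ mentioned above, and (b) the endpoint cases where Condition \ref{eq:A3} does not apply because its constraint $\delta<\tfrac{t}{2}$ fails (namely when $\ens\le 2\dn$ in Step~1, or when $s\le 2(t-s)$ in Step~3); in those cases I would replace the use of \ref{eq:A3} by a direct estimate such as $\int_0^{\ens}\|K_2(s,r)\|^2\,dr\le\int_0^s\|K_2(s,r)\|^2\,dr\le Cs^{2(\alpha_2\wedge 1)}\le C\dn^{2(\alpha_2\wedge 1)}$, using Condition \ref{eq:A2} with base point $0$ (and the analogous bounds for $K_1$), so the same rates are preserved.
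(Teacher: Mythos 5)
Your proposal is correct and follows essentially the same route as the paper: a priori finiteness of the moments by induction over the grid, BDG/Minkowski/H\"older bounds on $A^{1,n}_s$ and $A^{2,n}_s$, Gr\"onwall for the uniform moment bound, the same four-term decomposition of $\Xb^n_t-\Xb^n_s$ into new-interval and kernel-increment contributions, and a final feedback step to obtain \eqref{eq:apriori_Milst2}. The only differences are cosmetic: you extract the $\dn^{p(\alpha_2\wedge 1)}$ rate for the $A$--processes up front rather than in a second pass after Gr\"onwall, and your explicit treatment of the $\delta<\tfrac{t}{2}$ constraint in Condition \ref{eq:A3} (falling back on \ref{eq:A2} with base point $0$) is a detail the paper leaves implicit.
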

	\begin{proof}
	 $(i)$ Let us first consider the term $\EE \big[ \big| \Xb_{t}^n \big|^p \big]$.
	Notice that the solution $\Xb^n$ is essentially defined on the discrete-time grid $\{ t^n_k, ~k=0, \dots, n\}$.
	When $\E \big[ |X_0|^p \big] < \infty$, using the induction argument and Condition \ref{eq:B}, together with the boundedness of $b'$ and $\sigma'$,
	it is easy to deduce  that
	$\E \big[ \big| \Xb^n_t \big|^p \big] < \infty$ for every $n \ge 1$ and $t \in [0,T]$.
	Then we do not really need to localise the process $\Xb^n$ to obtain the \emph{a priori} estimation.

	\vspace{0.5em}

	First, by Condition \ref{eq:A1}, one has
	$$
		\int_0^{\ens}|K_2(s,r)-K_2(\ens,r)|^{2\beta_2}~dr\leq C\left(\int_0^{\ens} |K_2(s,r)|^{2\beta_2}+|K_2(\ens,r)|^{2\beta_2}~dr\right)< \infty.
	$$
	Then by the BDG inequality and H\"{o}lder's inequality with $\beta_2>1$, it follows that
	\begin{align}\label{eq:boundAn}
		&
		\EE \Big[ \big|A_s^{1,n} \big|^p\Big] + \EE \Big[ \big|A_s^{2,n} \big|^p\Big] \nonumber \\
		&\leq
		C \EE \Big[ \Big( \int_0^{\ens}\big|K_2(s,r)-K_2(\ens,r)\big|^2~  \big| \sigma(\eta_{n}(r),\Xb_{\eta_{n}(r)}^n) \big|^2~dr\Big)^{\frac{p}{2}} \Big] \nonumber  \\
		&~~~+
		C \EE \Big[ \Big( \int_{\ens}^{s} |K_2(s,r) |^2  \big| \sigma(\eta_{n}(r),\Xb_{\eta_{n}(r)}^n) \big|^2~dr\Big)^{\frac{p}{2}} \Big]  \nonumber \\
		&\leq C \EE \bigg[
			 \bigg( \int_0^{\ens}\big|K_2(s,r)-K_2(\ens,r)\big|^{2\beta_2}~dr \bigg)^{\frac{p}{2\beta_2}}
			 \bigg( \int_{0}^{\ens}|\sigma(\eta_{n}(r),\Xb_{\eta_{n}(r)}^n)|^{\frac{2\beta_2}{\beta_2-1}}~dr \bigg)^{\frac{p(\beta_2-1)}{2\beta_2}}
		\bigg] \nonumber \\
		&\quad\quad +
		C \EE\bigg [
			\bigg(\int_{\ens}^{s} |K_2(s,r)|^{2\beta_2} ~dr\bigg)^{\frac{p}{2\beta_2}}
			\bigg(\int_{\ens}^{s} \sigma(\eta_{n}(r),\Xb_{\eta_{n}(r)}^n)|^{\frac{2\beta_2}{\beta_2-1}}~dr\bigg)^{\frac{p(\beta_2-1)}{2\beta_2}}
		\bigg]  \nonumber\\
		&\leq
		C \EE \Big[ \int_0^s \big| \sigma \big(\eta_{n}(r),\Xb_{\eta_{n}(r)}^n \big) \big|^p ~dr \Big]
		~\leq~
		C \int_0^s \Big(1+\EE \big[ \big|\Xb_{\eta_{n}(r)}^n \big|^p \big] \Big)~dr ,
	\end{align}
	where we applied H\"older's inequality for the second inequality with $\frac{p(\beta_2-1)}{2\beta_2}\geq 1$. 
	Next, applying again the BDG inequality and then H\"older's inequality as before,
	\begin{align*}
		\EE \big[ \big| \Xb_{t}^n \big|^p \big]
		&\leq
		C \EE\bigg[ \big|X_0|^p + \Big|\int_0^t K_1(t,s)\left(b(\ens,\Xb_{\ens}^n)+b'(\ens, \Xb_{\ens}^n) A_{s}^{1, n}\right)~ds \Big|^p \\
		&~~~~~~~~~~
			+\Big(\int_0^t |K_2(t,s)|^2\big|\sigma(\ens, \Xb_{\ens}^n)+\sigma'(\ens,\Xb_{\ens}^n) A_s^{n}\big|^2~ds\Big)^{\frac{p}{2}}\bigg] \\
		&\leq
		C \EE \big[ | X_0|^p \big]
		+
		C\int_0^{t } \E \Big[ \Big|b(\ens,\Xb_{\ens}^n)+b'(\ens,\Xb_{\ens}^n)A_{s}^{1, n}\Big|^p \Big]~ds\\
		&~~~~~~~~~
		+
		C\int_0^{t} \E \Big[ \Big|\sigma(\ens,\Xb_{\ens}^n)+\sigma'(\ens,\Xb_{\ens}^n) A_s^{n}\Big|^p \Big]~ds.
	\end{align*}
	By the boundedness condition of $b'$ and $\sigma'$ in Assumption \ref{assum:main2},
	it follows that
	\begin{eqnarray*}
		\EE \big[ \big|\Xb_{t}^n \big|^p \big]
		&\leq&
		C \Big(\EE \big[ |X_0|^p \big] +1+ \int_0^{t}\EE \big[ \big| \Xb_{\ens}^n \big|^p \big] ds
		+\int_0^{t}  \big(  \EE\big[ \big|A_s^{1, n}\big|^p \big] +  \EE\big[ \big|A_s^{2, n}\big|^p \big] \big)  ds \Big) \\
		&\leq&
		C \big(1+\EE \big[|X_0|^p \big]\big) + C \int_0^{t} \sup\limits_{u \in [0,s]}\EE\big[ \big |\Xb_{u}^n \big|^p \big] ds .
	\end{eqnarray*}
	Then we obtain the first estimation in \eqref{eq:apriori_Milst} by Grönwall's Lemma.
	
	\vspace{0.5em}
	
	\noindent $(ii)$ Let $s<t$. By direct computation, we write
	\begin{align*}
		\Xb_t^n-\Xb_{s}^n&=\int_0^{s} \Big(K_1(t,u)-K_1(s,u)\Big)\Big( b(\enu,\Xb_{\enu}^n)+b'(\enu,\Xb_{\enu}^n)  A_{u}^{1,n} \Big)~du\\
		&~~~+\int_{s}^t K_1(t,u)\Big(b(\enu,\Xb_{\enu}^n)+b'(\enu,\Xb_{\enu}^n)  A_{u}^{1,n}\Big)~du\\
		&~~~+\int_0^{s} \Big(K_2(t,u)-K_2(s,u)\Big)\Big( \sigma(\enu,\Xb_{\enu}^n)+\sigma'(\enu,\Xb_{\enu}^n) A_u^{n}\Big)~dW_u\\
		&~~~+\int_{s}^t K_2(t,u)\Big(\sigma(\enu,\Xb_{\enu}^n)+\sigma'(\enu,\Xb_{\enu}^n) A_u^{n}\Big)~dW_u\\
		&=:I_1+I_2+I_3+I_4.
	\end{align*}
	For $I_3$, we deduce from the BDG inequality and Minkowski's integral inequality that
	\begin{align*}
		\EE \big[ |I_3|^p \big]
		& \leq
		C
		\EE \Big[ \Big(\int_0^{s}\Big|K_2(t,u)-K_2(s,u)\Big|^2\Big|\sigma(\enu,\Xb_{\enu}^n)+\sigma'(\enu,\Xb_{\enu}^n) A_u^{n}\Big|^2 ~du \Big)^{\frac{p}{2}} \Big]\\
		&\leq
		C
		\bigg(\int_0^{s}\bigg(  \EE \bigg[\big|K_2(t,u)-K_2(s,u)\big|^{p} ~\Big|\sigma(\enu,\Xb_{\enu}^n)+\sigma'(\enu,\Xb_{\enu}^n)  A_u^{n}\Big|^p\bigg]
\bigg)^{\frac{2}{p}}~du \bigg)^{\frac{p}{2}} .
	\end{align*}
	Notice that $\EE| A_u^{n} |^p \leq C(1+\EE |X_{0}|^p)$ by \eqref{eq:boundAn} and the first estimation in \eqref{eq:apriori_Milst},
	hence it follows by Condition \ref{eq:A3} that
	\begin{align*}
		\EE \big[ |I_3|^p \big]
		~\leq~
		C \big(1+\EE \big[|X_0|^p \big]\big) \bigg(\int_0^{s} \Big|K_2(t,u)-K_2(s,u)\Big|^2~du \bigg)^{\frac{p}{2}}
		~\leq~
		C  \big(1+\EE \big[|X_0|^p \big]\big) (t-s)^{p(\alpha_2\wedge 1)}.
	\end{align*}
	For $I_4$, we use BDG's inequality, Minkowski's integral inequality,  the first estimation in \eqref{eq:apriori_Milst} and then \ref{eq:A2}  to deduce that
	\begin{align*}
		\EE \big[ |I_4|^p \big]
		&\leq C\EE \Big[ \Big( \int_{s}^t |K_2(t,u)|^2\big|\sigma(\enu,\Xb_{\enu}^n)+\sigma'(\enu,\Xb_{\enu}^n) A_u^{n}\big|^2~du \Big)^{\frac{p}{2}} \Big] \\
		&\leq C \left( \int_{s}^t \Big(\EE \Big[|K_2(t,u)|^p \big|\sigma(\enu,\Xb_{\enu}^n)+\sigma'(\enu,\Xb_{\enu}^n) A_u^{n}\big|^p\Big]\Big)^{\frac{2}{p}} ~du\right)^{\frac{p}{2}}\\
		&\leq C \big(1+\EE \big[|X_0|^p \big]\big) \left(\int_{s}^t |K_2(t,u)|^2~du\right)^{\frac{p}{2}}
		~\leq~ C \big(1+\EE \big[|X_0|^p \big]\big) (t-s)^{p(\alpha_2 \wedge 1)}.
	\end{align*}
	Further, by similar arguments, one can also obtain the estimation on $I_1$ and $I_2$:
	$$
		\EE \big[ |I_1|^p \big]
		+
		\EE \big[ |I_2|^p \big]
		~\leq~
		C \big(1+\EE \big[|X_0|^p \big]\big) (t-s)^{p(\alpha_1 \wedge 1)},
	$$
	and it follows that
	$$
		\EE \big[ \big| \Xb_t^n-\Xb_{\ent}^n \big|^p \big]
		~ \leq~
		C \big(\EE|I_1|^p+\EE|I_2|^p+\EE|I_3|^p+\EE|I_4|^p\big)
		\leq
		C \big(1+\EE \big[|X_0|^p \big]\big) \dn^{p(\alpha \wedge 1)} .
	$$

	\noindent $(iii)$ Finally, using the first estimation in \eqref{eq:apriori_Milst},
	one obtains from the BDG inequality and Minkowski's integral inequality that
	\begin{align*}
		 \EE \big[ |A_s^{1,n}|^p \big]
		+
		\EE \big[ |A_s^{2, n}|^p \big]
		&\leq
		C \left( \int_0^{\ens} \left(\EE\left[\left| K_2(s,r)-K_2(\ens,r)\right|^p~  |\sigma(\eta_{n}(r),\Xb_{\eta_{n}(r)}^n)|^p\right]\right)^{\frac{2}{p}}~dr\right)^{\frac{p}{2}}\\
		&~  +C \left(  \int_{\ens}^{s}   \left(\EE\left[ | K_2(s,r)|^p |\sigma(\eta_{n}(r),\Xb_{\eta_{n}(r)}^n)|^p\right]\right)^{\frac{2}{p}}~dr\right)^{\frac{p}{2}}
 		\\
		&~\le~
		C \big(1+\EE \big[|X_0|^p \big]\big)~ \dn^{p(\alpha_2\wedge 1)}
		~\le~
		C \big(1+\EE \big[|X_0|^p \big]\big)~ \dn^{p(\alpha \wedge 1)}.
	\end{align*}
	\end{proof}

	\noindent {\it Proof of Theorem \ref{th:convMilstScheme}.$(i)$.}
	Let us rewrite
	\begin{align*}
		X_t-\Xb_t^n =& \int_0^t K_1(t,s)\bigg(b(s,X_s)-\Big( b(\ens,\Xb_{\ens}^n)+b'(\ens,\Xb_{\ens}^n)   A_{s}^{1,n} \Big) \bigg)~ds \\
		&+\int_0^t K_2(t,s)\bigg(\sigma(s,X_s)- \Big( \sigma(\ens,\Xb_{\ens}^n)+\sigma'(\ens,\Xb_{\ens}^n)  A_s^{n}\Big)\bigg)~dW_s\\
		=& \int_0^t K_1(t,s)\Big( b(s,X_s)-b(s,\Xb_s^n)\Big)~ds +\int_0^t K_1(t,s)\Big( b(s,\Xb_s^n)-b(\ens,\Xb_s^n)\Big)~ds\\
		&+\int_0^t K_1(t,s)\bigg(b(\ens,\Xb_s^n)-\Big(b(\ens,\Xb_{\ens}^n)+b'(\ens,\Xb_{\ens}^n) A_s^{n} \Big) \bigg)~ds\\
		&+ \int_0^t K_1(t,s)b' \big(\ens,\Xb_{\ens}^n \big)  A_s^{2,n} ~ds \\
		&+\int_0^t K_2(t,s)\Big( \sigma(s,X_s)-\sigma(s,\Xb_s^n)\Big)~dW_s +\int_0^t K_2(t,s)\Big( \sigma(s,\Xb_s^n)-\sigma(\ens,\Xb_s^n)\Big)~dW_s\\
		&+\int_0^t K_2(t,s)\Big(\sigma(\ens,\Xb_s^n)-\Big(\sigma(\ens,\Xb_{\ens}^n)+\sigma'(\ens,\Xb_{\ens}^n) A_s^{n} \Big) \Big)~dW_s\\
		=&: ~J_1+ J_2+ J_3+ J_4+ J_5+ J_6+J_7 ~.
	\end{align*}
	For $J_1$, by similar computations as in Theorem \ref{thm:EulerScheme}, it is easy to obtain that
	$$
		\EE \big[ |J_1|^p \big]
		\leq C \int_0^t \EE \Big[ \big|X_s-\Xb_s^n \big|^p \Big]~ds~.
	$$
	For $J_2$, notice that $\int_0^t |K_1(t,s) |^{\beta_1}~ds <\infty$ and $p \ge \beta_1/(\beta_1-1)$,
	we have by H\"{o}lder's inequality,  Condition \ref{eq:Bt} and Proposition \ref{prop:Milstein} that
	\begin{align*}
		\EE \big[ |J_2|^p \big]
		~&\leq ~
		C\EE \Big[ \int_0^t (s-\ens)^{p(2\alpha' \wedge 1)} \big( 1+|\Xb_s^n|^p \big)~ds \Big]
		~\leq~
		C\big( 1+\EE \big[|X_0|^p \big] \big) \dn^{p(2\alpha' \wedge 1)} .
	\end{align*}
	For $J_3$, a Taylor expansion gives
	$$
		b(\ens,\Xb_s^n)
		~=~
		b(\ens,\Xb_{\ens}^n)+b'(\ens,\Xb_{\ens}^n)(\Xb_s^n-\Xb_{\ens}^n)+\epsilon_s^n ,
	$$
	where $|\epsilon_s^n| \leq C|\Xb_s^n-\Xb_{\ens}^n|^2$ (using that the second derivative of $b$ is bounded).
	Then by Minkowski's integral inequality, one has
	\begin{align}\label{eq:bound0I3}
		\EE \big[ |J_3|^p \big]
		&\leq
		\left(\int_{0}^t  \left( \EE \Big[ \Big| K_{1}(t,s)\left( b'(\ens, \Xb^n_{\ens})(\Xb^n_{s}-\Xb^n_{\ens}- A_u^{n}) + \epsilon^n_{s}\right) \Big|^p \Big] \right)^{\frac{1}{p}} ~ds\right)^p.
	\end{align}
	Thus using the boundedness of $b'$ (Assumption \ref{eq:Bt}) and the definitions of $\Xb^n$ and $A^n$   in \eqref{eq:MilsteinScheme}-\eqref{eq:defAn},
	\begin{align*}
		\EE& \left[ \left| b'(\ens, \Xb^n_{\ens})(\Xb^n_{s}-\Xb^n_{\ens}- A_s^{n}) + \epsilon^n_{s} \right|^p \right] \\
		& \hspace{1cm} \leq C \bigg\{ \EE  \Big[ \Big|\int_0^{\ens} \Big(K_1(s,r)-K_1(\ens,r) \Big)\Big(b(\enr, \Xb_{\eta_{n}(r)}^n)+b'(\enr,\Xb_{\eta_{n}(r)}^n) A_{r}^{1,n} \Big)~dr\Big|^p \Big]\\
		&\hspace{1.6cm} +\EE \Big[ \Big|\int_0^{\ens} \left( K_2(s,r)-K_2(\ens,r) \right) ~ \sigma '(\enr,\Xb_{\enr}^n) A_r^{n} ~dW_r\Big|^p \Big]\\
		&\hspace{1.6cm} +\EE \Big[ \Big|\int_{\ens}^{s}K_1(s,r)\left(b(\enr,\Xb_{\eta_{n}(r)}^n)+b'(\enr,\Xb_{\eta_{n}(r)}^n) A_{r}^{1,n} \right)~dr\Big|^p \Big]\\
		&\hspace{1.6cm} +\EE \Big[ \Big|\int_{\ens}^s K_2(s,r)  ~ \sigma'(\enr,\Xb_{\eta_{n}(r)}^n) A_r^{n} ~dW_r\Big|^p \Big]
			+ \EE \big[ |  \epsilon^n_{s} |^p \big] \bigg\} .
	\end{align*}
	We apply Minkowski's integral inequality for the first and third summand,
	and the BDG inequality for the second and the fourth,
	in order to obtain
	\begin{align*}
		\EE& \Big[ \Big| b'(\ens, \Xb^n_{\ens})(\Xb^n_{s}-\Xb^n_{\ens}- A_s^{n}) + \epsilon^n_{s} \Big|^p \Big] \\
		& \hspace{0.2cm}
		\leq
		C \bigg\{
		\left(\int_0^{\ens} \left(\EE \Big[ \Big| \left(K_1(s,r)-K_1(\ens,r) \right) \left(b(\enr, \Xb_{\eta_{n}(r)}^n)+b'(\enr,\Xb_{\eta_{n}(r)}^n) A_{r}^{1,n} \right)\Big|^p \Big] \right)^{\frac{1}{p}}~dr\right)^p\\
		&\hspace{1.6cm} +\EE \Big[ \Big(\int_0^{\ens} \left( K_2(s,r)-K_2(\ens,r) \right)^2 ~ \left(\sigma '(\enr,\Xb_{\enr}^n) A_r^{n}\right)^2 ~dr\Big)^{\frac{p}{2}} \Big] \\
		&\hspace{1.6cm} +\left(  \int_{\ens}^{s} \left( \EE \Big[ \Big| K_1(s,r)\left(b(\enr,\Xb_{\eta_{n}(r)}^n)+b'(\enr,\Xb_{\eta_{n}(r)}^n) A_{r}^{1,n}\right)\Big|^p \Big] \right)^{\frac{1}{p}}~dr\right)^p\\
		&\hspace{1.6cm} +\EE \Big[ \Big( \int_{\ens}^s K_2(s,r)^2  ~ \left(\sigma'(\enr,\Xb_{\eta_{n}(r)}^n)  A_r^{n}\right)^2 ~dr \Big)^{\frac{p}{2}} \Big]
		+ \EE \big[ |  \epsilon^n_{s} |^p \big]
		 \bigg\} .
	\end{align*}
	Now one uses the boundedness of $b'$ and $\sigma'$, the bound $\EE| b(\enr, \Xb_{\eta_{n}(r)}^n) |^p\leq C \left(1 +\EE|X_{0}|^p\right)$ from Proposition \ref{prop:Milstein}, the bound on $\EE|A_{r}^n|^p$ from Proposition \ref{prop:Milstein}, and Minkowski's integral inequality on the second and fourth summand to get
	\begin{align*}
		&\EE \Big[ \Big| b'(\ens, \Xb^n_{\ens})(\Xb^n_{s}-\Xb^n_{\ens}-A_s^{n}) + \epsilon^n_{s} \Big|^p \Big]  \\
		\leq~ &
		C \EE \big[ |  \epsilon^n_{s} |^p \big]
		~+~
		C \big(1+\EE \big[|X_0|^p \big]\big) \Big(\int_0^{\ens} \left| K_1(s,r)-K_1(\ens,r) \right|~dr \Big)^p\\
		&+
		C \big(1+\EE \big[|X_0|^p \big]\big) \Big(\int_0^{\ens} \left(\EE \Big[ \Big| \left( K_2(s,r)-K_2(\ens,r) \right)  A_r^{n} \Big|^p \Big] \right)^{\frac{2}{p}} ~dr \Big)^{\frac{p}{2}}\\
		&+ C \big(1+\EE \big[|X_0|^p \big]\big) \Big(  \int_{\ens}^{s} | K_1(s,r)| ~dr\Big)^p
			+C  \big(1+\EE \big[|X_0|^p \big]\big) \Big( \int_{\ens}^s \Big( \EE\Big[ \big| K_2(s,r) ~  A_r^{n}\big|^p \Big] \Big)^{\frac{2}{p}} ~dr  \Big)^{\frac{p}{2}}.
	\end{align*}
	Observe that it follows from Proposition \ref{prop:Milstein} that $\EE \big[ |  \epsilon^n_{s} |^p \big] \leq C \dn^{2p(\alpha\wedge 1)}$ and that $\int_{\ens}^{s} | K_1(s,r)| ~dr\leq \dn^{\alpha_1 }$ by Condition \ref{eq:A2}.
	Then  the bound on $\EE \big[ |A_r^{n}|^p \big]$ from Proposition \ref{prop:Milstein},
	 together with conditions \ref{eq:A2} and \ref{eq:A3}, gives that
	\begin{align*}
		&
		\EE \Big[ \Big| b'(\ens, \Xb^n_{\ens})(\Xb^n_{s}-\Xb^n_{\ens}-A_s^{n}) + \epsilon^n_{s} \Big|^p  \Big] \\
		\leq~&
		C \Big(  \dn^{p (\alpha_1\wedge1)} + \dn^{2 p(\alpha_2 \wedge 1)} + \dn^{p \left(\alpha_1 \wedge 1\right)} + \dn^{2 p(\alpha\wedge 1)} + \dn^{2p(\alpha\wedge 1)}\Big)
		~\leq~
		C  \big(1+\EE \big[|X_0|^p \big]\big) \dn^{p(2\alpha' \wedge 1)} ,
	\end{align*}
	where we recall that $\alpha'$ was defined in Assumption \ref{assum:main2}.
	Plugging this bound in \eqref{eq:bound0I3}, it follows that
	\begin{align*}
		\EE \big[ |J_3|^p \big] &\leq  C \big(1+\EE \big[|X_0|^p \big]\big) \dn^{p(2\alpha' \wedge 1)} .
	\end{align*}
	For $J_4$, we denote by $\eta^+_n(r) := t^n_{k+1}$ for $r \in [t_k, t_{k+1})$,
	then by the boundedness of  $b'$, \ref{eq:A6}, Minkowski's integral inequality and the classical Fubini theorem,
	it follows that
	\begin{align*}
	    \EE \big[ |J_4|^p \big]  
	    &=
	    \EE \bigg[ \bigg| \int_0^t \int_r^{\eta^+_n(r)} K_1(t,s)K_2(s,r)b'(\eta_n(r), \Xb_{\eta_n(r)}^n)\sigma(\eta_{n}(r),\Xb^n_{\eta_{n}(r)})~ds ~dW_r \bigg|^p \bigg]\\
	    &\leq C\EE\bigg[ \int_0^t \Big| \int_r^{\eta^+_n(r)}K_1(t,s)K_2(s,r)b'( \eta_n(r), \Xb_{\eta_n(r)}^n)\sigma(\eta_{n}(r),\Xb^n_{\eta_{n}(r)})~ds \Big|^p dr  \bigg]\\
	    &\leq C \big(1+\EE \big[|X_0|^p \big]\big) \int_0^t \left(\int_r^{\eta^+_n(r)} \left| K_1(t,s)K_2(s,r) \right|~ds \right)^p ~dr\\
	    &\leq C \big(1+\EE \big[|X_0|^p \big]\big) ~ \dn^{p(2\alpha' \wedge 1)}.
	\end{align*}
	For $J_5$, we have by BDG's inequality and H\"{o}lder's inequality that
	$$\EE \big[ |J_5|^p \big]  \leq C \int_0^t\EE\big[  |X_s-\Xb_s^n|^p \big]~ds .$$
	For $J_6$, we have by BDG's inequality, Minkowski's integral inequality and Assumption \eqref{assum:main2} that
	\begin{align*}
		\EE \big[ |J_6|^p \big]
		~\leq~
		C\EE \Big[ \Big( \int_0^t |K_2(t,s)|^2(s-\ens)^{2\alpha' \wedge 1}(1+|\Xb_s^n|^2)~ds \Big)^{\frac{p}{2}} \Big]
		~\leq~
		C \big(1+\EE \big[|X_0|^p \big]\big) ~ \dn^{p(2\alpha' \wedge 1)} .
	\end{align*}
	The proof to bound $J_7$ is the same as for $J_{3}$: first, we have by the BDG inequality and Minkowski's integral inequality that
	\begin{align*}
		\EE \big[ |J_7|^p \big]
		\leq
		\left(\int_{0}^t  \left( \EE \Big[ \Big| K_{2}(t,s)\left( \sigma'(\ens, \Xb^n_{\ens})(\Xb^n_{s}-\Xb^n_{\ens}- A_s^{n} ) + \widetilde{\epsilon}^n_{s}\right) \Big|^p \Big] \right)^{\frac{2}{p}} ~ds\right)^{\frac{p}{2}},
	\end{align*}
	where $\widetilde{\epsilon}_{s}^n \leq C| \Xb^n_{s} - \Xb^n_{\ens} |^2 $ comes from the Taylor expansion of $\sigma$: $\sigma(\ens,\Xb_s^n)=\sigma(\ens,\Xb_{\ens}^n)+\sigma'(\ens,\Xb_{\ens}^n)(\Xb_s^n-\Xb_{\ens}^n)+\widetilde{\epsilon}_s^n$. Similarly to the computations made for $b$, it is clear that
	\begin{align*}
		\EE&\Big[ \Big| \sigma'(\ens, \Xb^n_{\ens})(\Xb^n_{s}-\Xb^n_{\ens}- A_s^{n}) + \widetilde{\epsilon}^n_{s} \Big|^p\Big]
		\leq
		C \big(1+\EE \big[|X_0|^p \big]\big) \dn^{p(2\alpha' \wedge 1)} .
	\end{align*}
	Hence
	\begin{align*}
		\EE \big[ |J_7|^p \big] \leq  C \big(1+\EE \big[|X_0|^p \big]\big) ~ \dn^{p(2\alpha' \wedge 1)}.
	\end{align*}
	In summary, one has
	$$
		\EE \big[ \big|X_t-\Xb_t^n \big|^p \big]
		\leq
		C \big(1+\EE \big[|X_0|^p \big]\big) \dn^{p(2\alpha' \wedge 1)}+C\int_0^t \EE \big[\big |X_s-\Xb_s^n \big|^p \big]~ds,
	$$
	and one can conclude with  Grönwall's Lemma.
	\qed

\subsection{Proof of Theorems \ref{thm:EulerScheme}.$(ii)$ and \ref{th:convMilstScheme}.$(ii)$}
\label{subsec:proofUnif}

\subsubsection{Garsia-Rodemich-Rumsey's estimates}\label{subsec:GRR}

	Let us first state the following consequences of Garsia-Rodemich-Rumsey's lemma \cite{GRR}.

	\begin{lemma}\label{lem:GRR}
		Let $\{Y_t, t\in[a,b]\}$ be an $\R^d$-valued continuous stochastic process on $[a, b] \subset \R$,
		Then for all $\gamma>0$, $p\geq 1\vee \gamma$ and $q>0$ such that $pq>2$,
		\begin{align*}
			\EE \bigg[ \sup_{t\in[a,b]} |Y_t-Y_a|^\gamma \bigg]
			&\leq
			\left(C \frac{pq}{pq-2} (b-a)^{q-\frac{2}{p}}\right)^\gamma\ \EE\left[ \left( \int_a^b \int_a^b \frac{|Y_s-Y_t|^p}{|t-s|^{pq}}\ d s\ d t\right)^{\frac{\gamma}{p}} \right] \\
			&\leq
			\left(C \frac{pq}{pq-2} (b-a)^{q-\frac{2}{p}}\right)^\gamma\ \left(\int_a^b \int_a^b \frac{\EE \big[ |Y_s-Y_t|^p \big]}{|t-s|^{pq}}\ d s\ d t\right)^{\frac{\gamma}{p}}.
		\end{align*}
	\end{lemma}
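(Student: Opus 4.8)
The plan is to deduce both inequalities from the classical Garsia--Rodemich--Rumsey (GRR) lemma \cite{GRR}, specialised to power functions, and then a pathwise application followed by Jensen's inequality. Recall the general statement: if $\Psi$ is continuous, even, convex, increasing on $[0,\infty)$ with $\Psi(0)=0$ and $\Psi(\infty)=\infty$, if $\varphi:[0,\infty)\to[0,\infty)$ is continuous and increasing with $\varphi(0)=0$, and if $f:[a,b]\to\R^d$ is continuous with
$$
B \;:=\; \int_a^b\!\!\int_a^b \Psi\!\Big(\tfrac{|f(s)-f(t)|}{\varphi(|s-t|)}\Big)\,ds\,dt \;<\;\infty,
$$
then $|f(s)-f(t)| \le 8\int_0^{|s-t|}\Psi^{-1}\!\big(4B u^{-2}\big)\,d\varphi(u)$ for all $s,t\in[a,b]$. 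First I would take $\Psi(x)=x^p$ and $\varphi(u)=u^q$; note that convexity of $x\mapsto x^p$ on $[0,\infty)$ is exactly the requirement $p\ge 1$, which is part of the hypothesis. Then $\Psi^{-1}(y)=y^{1/p}$ and $d\varphi(u)=q\,u^{q-1}\,du$.

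Plugging these in, the pointwise GRR bound reads $|f(s)-f(t)| \le 8q\,(4B)^{1/p}\int_0^{|s-t|} u^{\,q-1-2/p}\,du$. The condition $pq>2$ is precisely what makes $q-1-2/p>-1$, so the radial integral converges and equals $|s-t|^{q-2/p}/(q-2/p)$; since $\tfrac{q}{q-2/p}=\tfrac{pq}{pq-2}$ and $4^{1/p}\le 4$ (again using $p\ge 1$), this gives
$$
|f(s)-f(t)| \;\le\; \frac{32\,pq}{pq-2}\;B^{1/p}\,|s-t|^{q-2/p}.
$$
Setting $t=a$, taking the supremum over $s\in[a,b]$, bounding $|s-a|^{q-2/p}\le(b-a)^{q-2/p}$, and raising to the power $\gamma$ yields exactly the first displayed inequality of the lemma, with $C=32$. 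This is applied $\omega$ by $\omega$ to the continuous sample paths of $Y$; the random variables $\sup_{t\in[a,b]}|Y_t-Y_a|$ and $B(\omega)=\int_a^b\!\int_a^b |Y_s-Y_t|^p|t-s|^{-pq}\,ds\,dt$ are measurable by path continuity together with Tonelli's theorem.

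For the second inequality I would simply take expectations in the first one. Since $p\ge 1\vee\gamma$ forces $\gamma/p\le 1$, the map $x\mapsto x^{\gamma/p}$ is concave, so Jensen's inequality gives
$$
\EE\Big[\Big(\int_a^b\!\!\int_a^b \tfrac{|Y_s-Y_t|^p}{|t-s|^{pq}}\,ds\,dt\Big)^{\gamma/p}\Big]
\;\le\;
\Big(\EE\Big[\int_a^b\!\!\int_a^b \tfrac{|Y_s-Y_t|^p}{|t-s|^{pq}}\,ds\,dt\Big]\Big)^{\gamma/p},
$$
and, the integrand being nonnegative, Tonelli's theorem interchanges $\EE$ with the double integral, turning the right-hand side into $\big(\int_a^b\!\int_a^b \EE[|Y_s-Y_t|^p]/|t-s|^{pq}\,ds\,dt\big)^{\gamma/p}$, as claimed.

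The computation is essentially all bookkeeping; the only point that requires care is the tracking of the constant. The factor $pq/(pq-2)$, which diverges as $pq\downarrow 2$, must be displayed explicitly since it cannot be absorbed, whereas the purely numerical factors ($8$ from GRR and $4^{1/p}$ from $\Psi^{-1}$) are bounded uniformly only because $p\ge 1$, which is what allows them to be swallowed into the universal constant $C$. I expect this — making sure $C$ is genuinely independent of $p$, $q$, $\gamma$, $a$, $b$ — to be the most delicate aspect, while the convexity check on $\Psi$ and the Jensen step both rely on the same hypothesis $p\ge 1\vee\gamma$.
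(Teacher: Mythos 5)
Your proof is correct and follows essentially the same route as the paper, which simply invokes the Garsia--Rodemich--Rumsey lemma with $\Psi(x)=x^p$ and $\varphi(u)=u^q$ and then passes the expectation inside via H\"older's inequality (your Jensen step for the concave power $\gamma/p\le 1$ combined with Tonelli is the same estimate). Your version just carries out the bookkeeping of the constant explicitly, which the paper leaves implicit.
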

	\begin{proof}
		With the notations of \cite[p.353-354]{Nualart}, we apply the Garsia-Rodemich-Rumsey lemma with $\Psi(x) = x^p$ and $p(x)=x^q$ to obtain the first inequality.
		Then the second inequality follows by the H\"older's inequality.
	\end{proof}

	As a consequence of Lemma \ref{lem:GRR}, we easily deduce the following corollary.
	\begin{corollary}\label{cor:GRR}
		Let $(Y^n)_{n\ge 1}$ be a sequence of continuous processes on $[0,T]$.
		Assume that there exist constants $\gamma>0$, $p\geq 1\vee \gamma$, $\eta>1$, $\rho>0$, $C_{0} > 0$
		and a sequence $(\delta_{n})_{n\ge 1}$ of positive real numbers such that
		\begin{equation*}
			\EE \Big[ \big|Y^n_{s} - Y^n_{t} \big|^p \Big]
			~\leq~
			C_{0}\,  |s-t|^\eta\, \delta_{n}^{\rho}, \quad \forall s,t\in[0,T], ~\forall n \ge 1.
		\end{equation*}
		Then there exists a constant $C_{p,\gamma, \eta,T}>0$, depending only on $p$, $\gamma$, $\eta$ and $T$,
		such that $\forall n \ge 1$,
		\begin{equation*}
			\EE \Big[ \sup_{t\in[0,T]} \big| Y^n_t-Y^n_0 \big|^\gamma \Big]
			\leq
			C_{p,\gamma,T} ~ C_{0}^{\frac{\gamma}{p}} ~ \delta_{n}^{\frac{\rho \gamma}{p}}.
		\end{equation*}
	\end{corollary}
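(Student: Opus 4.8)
The plan is to apply Lemma \ref{lem:GRR} on the interval $[a,b]=[0,T]$ to each process $Y^n$ for a suitably chosen exponent $q$, and then to estimate the resulting double integral by inserting the assumed moment bound.

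First I would fix the parameters. The constraints coming from Lemma \ref{lem:GRR} are $p\ge 1\vee\gamma$ (given) and $pq>2$; in addition, for the double integral $\int_0^T\int_0^T \EE[|Y^n_s-Y^n_t|^p]/|t-s|^{pq}\,ds\,dt$ to be finite after substituting the hypothesis $\EE[|Y^n_s-Y^n_t|^p]\le C_0|s-t|^\eta\delta_n^\rho$, I need $\eta-pq>-1$, i.e. $pq<\eta+1$. Since $\eta>1$ we have $\eta+1>2$, so the interval $(2/p,(\eta+1)/p)$ is nonempty and one may take, say, $q:=(\eta+3)/(2p)$, for which $pq=(\eta+3)/2\in(2,\eta+1)$ and $q-2/p=(\eta-1)/(2p)>0$. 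This is the only place where the hypothesis $\eta>1$ enters, and it is the one point requiring a little care; everything else is bookkeeping of exponents.

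With $q$ so chosen, the second inequality in Lemma \ref{lem:GRR} gives, for every $n\ge 1$,
\[
  \EE\Big[\sup_{t\in[0,T]}|Y^n_t-Y^n_0|^\gamma\Big]
  \le \Big(C\tfrac{pq}{pq-2}T^{q-2/p}\Big)^\gamma
  \Big(\int_0^T\!\!\int_0^T \frac{\EE[|Y^n_s-Y^n_t|^p]}{|t-s|^{pq}}\,ds\,dt\Big)^{\frac{\gamma}{p}}.
\]
Next I would bound the double integral using the assumption: it is at most $C_0\,\delta_n^\rho\int_0^T\int_0^T|t-s|^{\eta-pq}\,ds\,dt$, and since $\eta-pq+1>0$ a direct computation gives $\int_0^T\int_0^T|t-s|^{\eta-pq}\,ds\,dt=\frac{2T^{\eta-pq+2}}{(\eta-pq+1)(\eta-pq+2)}=:c_{p,\eta,T}<\infty$. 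Raising to the power $\gamma/p$ yields $\big(\int\int\big)^{\gamma/p}\le (c_{p,\eta,T})^{\gamma/p}\,C_0^{\gamma/p}\,\delta_n^{\rho\gamma/p}$.

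Finally, combining the two estimates, all the $n$-independent factors collapse into a single constant depending only on $p$, $\gamma$, $\eta$ and $T$ (recall that $q$, hence $pq$, was fixed as a function of $p$ and $\eta$ only), giving $\EE\big[\sup_{t\in[0,T]}|Y^n_t-Y^n_0|^\gamma\big]\le C_{p,\gamma,\eta,T}\,C_0^{\gamma/p}\,\delta_n^{\rho\gamma/p}$, which is the claim. I do not anticipate any genuine obstacle here: the substantive point is merely the compatibility of $pq>2$ with the integrability threshold $pq<\eta+1$, guaranteed precisely by $\eta>1$, after which the argument is routine.
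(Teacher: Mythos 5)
Your argument is correct and is precisely the deduction the paper leaves implicit when it says the corollary follows ``easily'' from Lemma \ref{lem:GRR}: choosing $q$ with $2<pq<\eta+1$ (possible exactly because $\eta>1$), applying the second inequality of the lemma on $[0,T]$, and evaluating $\int_0^T\int_0^T|t-s|^{\eta-pq}\,ds\,dt$ in closed form. Your bookkeeping of exponents and constants is accurate, and it also correctly shows that the constant depends on $\eta$ (the paper's display writes $C_{p,\gamma,T}$, but the dependence on $\eta$ through the choice of $q$ is genuine, as your proof makes explicit).
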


\subsubsection{Proof of Theorems \ref{thm:EulerScheme}.$(ii)$ and \ref{th:convMilstScheme}.$(ii)$}

	$(i)$ Let us first consider Theorems \ref{thm:EulerScheme}.$(ii)$,
	for which we will apply Corollary \ref{cor:GRR} to  $Y^{n}:= X-X^{n}$, with the solution  $X^{n}$ to the Euler scheme \eqref{eq:EulerScheme}.
	
	\vspace{0.5em}

	Let  $\theta\in (0,1)$, one has
	\begin{align*}
		\Big(\EE \big[ \big| Y^{n}_{s} - Y^{n}_{t} \big|^p \big] \Big)^{\frac{1}{p}}
		&\leq
		\left(
			\Big( \EE \Big[ \big| X_{s} - X_{t} \big|^p \Big] \Big)^{\frac{1}{p}}
			+
			\Big( \EE \Big[ \big| X^{n}_{s} - X^{n}_{t} \big|^p \Big] \Big)^{\frac{1}{p}}
		\right)^\theta\\
		&\quad \quad
		\x
		\left(
			\Big( \EE \Big[ \big| X_{t} - X^{n}_{t} \big|^p \Big] \Big)^{\frac{1}{p}}
			+
			\Big(\EE \Big[ \big| X_{s} - X^{n}_{s} \big |^p \Big] \Big)^{\frac{1}{p}}
		\right)^{1-\theta}.
	\end{align*}
	The two first terms on the r.h.s. can be controlled using Proposition \ref{prop:Euler},
	and the two last terms would be controlled using Theorem \ref{thm:EulerScheme}.$(i)$,
	and it follows that, for some constant $C$ depending on $p$ but independent of $n$ 
	and $\E[ |X_0|^p]$,
	\begin{equation} \label{eq:sup_norm_interm}
		\left(\EE \Big[  \big| Y^{n}_{s} - Y^{n}_{t} \big |^p \Big]\right)^{\frac{1}{p}}
		~\leq~
		C \, \big(1+ \E\big[ |X_{0}|^p\big] \big)^{\frac{1}{p}}\, |s-t  |^{(\alpha \wedge 1)\theta} \times \dn^{( \alpha \wedge 1) (1-\theta)} .
	\end{equation}
	For any $\gamma\in(0,p]$ and $\varepsilon\in (\frac{1}{p}, \alpha\wedge 1)$,
	one can set $\theta := \varepsilon (\alpha\wedge 1)^{-1}$
	and $\eta := p\varepsilon >1$,
	and let  $\rho \equiv p ( \alpha \wedge 1) (1-\theta)$.
	Then the estimation in \eqref{eq:sup_norm_interm} satisfies the conditions in Corollary \ref{cor:GRR},
	and it follows that, for some constant $C_{\gamma, \varepsilon} > 0$,
	\begin{align*}
		\bigg(\EE \Big[ \sup_{t\in[0,T]} \big|X_t-X^{n}_{t} \big|^\gamma \Big] \bigg)^{\frac{1}{\gamma}}
		&=
		\bigg(\EE \Big[ \sup_{t\in[0,T]} \big|Y^n_t - Y^n_0 \big|^\gamma \Big] \bigg)^{\frac{1}{\gamma}} \\
		&\leq
		C_{\gamma, \varepsilon}  \left(1+ \E\big[ |X_{0}|^p\big] \right)^{\frac{1}{p}} ~\dn^{( \alpha \wedge 1) (1-\theta)}
		=
		C_{\gamma, \varepsilon} \left(1+\E\big[ |X_{0}|^p\big]\right)^{\frac{1}{p}} ~\dn^{( \alpha \wedge 1) -\varepsilon},
	\end{align*}
	which proves Theorems \ref{thm:EulerScheme}.$(ii)$.
	
	\vspace{0.5em}
	
	\noindent $(ii)$ The proof for Theorem  \ref{th:convMilstScheme}.$(ii)$ is similar.
	It is enough to apply Corollary \ref{cor:GRR} on $\overline Y^n := X - \Xb^n$.
	In place of the estimations in Proposition \ref{prop:Euler} and Theorem \ref{thm:EulerScheme}$.(i)$,
	one can use those in Proposition \ref{prop:Milstein} and Theorem \ref{th:convMilstScheme}.$(i)$
	to obtain that
	\begin{equation*}
		\left(\EE \Big[  \big| \overline Y^{n}_{s} - \overline Y^{n}_{t} \big |^p \Big]\right)^{\frac{1}{p}}
		~\leq~
		C \, \left(1+ \E\big[ |X_{0}|^p\big]p\right)^{\frac{1}{p}}\,  |s-t  |^{(\alpha \wedge 1)\theta} \times \dn^{( 2 \alpha' \wedge 1) (1-\theta)},
	\end{equation*}
	for some constant $C$ independent of $n$.
	The rest of the proof is then almost the same as in item $(i)$.
	\qed


\end{document}